\newtheorem{theorem}{Theorem}[section]
\newtheorem{problem}[theorem]{Problem}
\newtheorem{proposition}[theorem]{Proposition}
\newtheorem{corollary}[theorem]{Corollary}
\newtheorem{lemma}[theorem]{Lemma}
\newtheorem{example}[theorem]{Example}
\newtheorem{remark}[theorem]{Remark}
\newtheorem{definition}[theorem]{Definition}
\newtheorem{fact}[theorem]{Fact}
\newcommand{\T}{\mathbb{T}}
\newcommand{\Z}{\mathbb{Z}}
\newcommand{\N}{\mathbb{N}}
\newcommand{\supp}{\mathrm{supp}}
\def\T{{\mathbb T}}
\def\eps{{\varepsilon}}
\def\Z{{\mathbb Z}}
\def\N{{\mathbb N}}
\def\R{{\mathbb R}}
\def\Q{{\mathbb Q}}
\def\P{{\mathbb P}}
\begin{document}
\title[$s$-characterized subgroups]{Statistically characterized subgroups related to some non-arithmetic sequence of integers II (a quest for countable subgroups)}
\subjclass[2010]{Primary: 22B05, Secondary: 11B05, 40A05} \keywords{Circle group, characterized subgroup, natural density, s-characterized subgroup, arithmetic sequence, lifting function}

\author{Pratulananda Das}
\address{Department of Mathematics, Jadavpur University, Kolkata-700032, India}
\email {pratulananda@yahoo.co.in}

\author{Ayan Ghosh}
\address{Department of Mathematics, Jadavpur University, Kolkata-700032, India}
\email {ayanghosh.jumath@gmail.com}

\author{Tamim Aziz}
\address{Department of Mathematics, University of North Bengal, Darjeeling-734013, India}
\email {rs\_tamim@nbu.ac.in}

\begin{abstract}
Following the work of [Dikranjan et al., Fund. Math. 249:185-209, 2020] for arithmetic sequences, very recently in [Das et al., Expo. Math. 43(3):125653, 2025], statistically characterized subgroups have been investigated for certain types of  non-arithmetic sequences. Building on this work, we investigate further and demonstrate that, for a particular class of non-arithmetic sequences, the statistically characterized subgroup coincides with the corresponding characterized subgroup. In this context it should be kept in mind that statistical convergence (convergence w.r. to the ideal of natural density zero sets) encompasses much more sequences than usual convergence (convergence w.r. to the ideal of finite sets) and it had already been shown that statistically characterized subgroups corresponding to arithmetic sequences can not be characterized by any sequence [Das et al., Bull. Sci. Math. 179(2):103157, 2022] and they are always of the size of the continuum. From the very beginning it has been an open question as to whether statistically characterized subgroups can be small in size i.e. countably infinite. Our observation thus sheds new light on the crucial role of sequences generating subgroups of the circle group and at the same time  one can subsequently identify a class of sequences for which statistically characterized subgroups are countably infinite. This result provides a negative solution to Problem 2.16 posed in [Das et al., Expo. Math. 43(3):125653, 2025] and Question 6.3 from [Dikranjan et al., Fund. Math. 249:185-209, 2020]. Additionally, our findings resolve several open problems from [Dikranjan et al., Topo. Appl., 2025].
\end{abstract}
\maketitle

\section{Introduction and background}
The concept of characterized subgroups has evolved significantly over the years, broadening its scope beyond its original foundations to serve as a generalization of the torsion subgroup. Its rich history is closely tied to the study of sequences of multiples of a real number mod 1. The classical case, where these sequences are uniformly distributed mod 1, was first explored in the seminal work of Herman Weyl \cite{W}, and has since inspired further developments (see \cite{BDBW, BDMW1, DGDis, DPS} for more details and recent advancements). Moreover, these sequences of multiples have a deep connection with Ergodic theory, as can be seen from the study of Sturmian sequences and Hartman sets \cite{Wi}. The concept also plays a pivotal role in the structure theory of locally compact abelian groups \cite{AT,AD,R}. Furthermore, there is a profound relationship between characterized subgroups and ``trigonometric thin sets", particularly Arbault sets \cite{A1}, which are significant in Harmonic Analysis (see \cite{BuKR, DG, El, Ka} for detailed investigations in these directions).

Before proceeding further, let us formally present the definition of a {\em characterized subgroup} of the circle group $\T$.

\begin{definition}
Let $(a_n)$ be a sequence of integers, the subgroup
$$
t_{(a_n)}(\T) := \{x\in \T: a_nx \to 0\mbox{ in } \T\}
$$
of $\T$ is called {\em a characterized} $($by $(a_n))$ {\em subgroup} of $\T$.
\end{definition}

The term {\em characterized} appeared much later, coined in \cite{BDS}. Further, it is important to note that, for practical purposes, it is sufficient to work with sequences of positive integers only, which has been the usual practice. Although these subgroups can be generated by any sequence of positive integers, one major theme over the years has been the study of these subgroups when they are generated by arithmetic sequences. Precisely, a sequence of positive integers $(a_n)$ is an
{\em arithmetic sequence} if
$$1 = a_0 < a_1 < a_2 < \dots < a_n < \dots ~~\mbox{and}~a_n|a_{n+1}~ \mbox{for every}~n \in \mathbb{N}.$$

In 2020, the notion of characterized subgroups were generalized by using a more general mode of convergence and here the idea of natural density (see \cite{Bu1}) and the corresponding mode of convergence came into the picture. In the language of \cite{DDB} "Although the correspondence $(a_n) \to t_{(a_n)}(\T)$ is decreasing (with respect to inclusion), in many cases the subgroup $t_{(a_n)}(\T)$ is rather small, even if the sequence $(a_n)$ is not too dense." So it seemed a natural course of action to involve a more general mode of convergence (for more details of the reasons and motivation behind this approach see \cite{DDB}).

For $m,n\in\mathbb{N}$ and $m\leq n$, let $[m, n]$ denote the set $\{m, m+1, m+2,...,n\}$. By $|A|$ we denote the cardinality of a set $A$. The lower and upper natural densities of $A \subseteq \mathbb{N}$ are defined by
$$
\underline{d}(A)=\displaystyle{\liminf_{n\to\infty}}\frac{|A\cap [0,n-1]|}{n} ~~\mbox{and}~~
\overline{d}(A)=\displaystyle{\limsup_{n\to\infty}}\frac{|A\cap [0,n-1]|}{n}.
$$
If $\underline{d}(A)=\overline{d}(A)$, we say that the natural density of $A$ exists and it is denoted by $d(A)$. As usual,
$$
\mathcal{I}_d = \{A \subseteq \mathbb{N}: d(A) = 0\}
$$
denotes the ideal of ``natural density zero" sets and $\mathcal{I}_d^*$ is the dual filter, i.e., $\mathcal{I}_d^* = \{A \subseteq \mathbb{N}: d(A) = 1\}$. Let us now recall the notion of statistical convergence in the sense of \cite{F,Fr,S,St,Z}.

\begin{definition}\label{Def1}
A sequence of real numbers $(x_n)$ is said to converge to a real number $x_0$ statistically if for any $\eps > 0$, $d(\{n \in \mathbb{N}: |x_n - x_0| \geq \eps\}) = 0$.
\end{definition}

It is now known \cite{S} that $x_n \to x_0$ statistically precisely when there exists a subset A of $ \N$ of asymptotic density 0, such that $\displaystyle{\lim_{n \in \N \setminus A}} x_n = x_0$ which makes this convergence interesting but not too wild. The concept of statistical convergence has been extensively studied over the years, beginning in metric spaces and later expanding to general topological spaces \cite{MK}. Over the past three decades, significant progress has been made in this area, largely due to its natural extension of the notion of usual convergence. Statistical convergence retains many fundamental properties of traditional convergence, while encompassing a broader class of sequences. For some insightful applications of statistical convergence, the readers are referred to \cite{BDK,CKG,FGT}. As a natural application, the following notion was introduced in \cite{DDB}:
\begin{definition}
For a sequence of integers $(a_n)$ the subgroup
\begin{equation}\label{def:stat:conv}
t^s_{(a_n)}(\T) := \{x\in \T: a_nx \to 0\  \mbox{ statistically in }\  \T\}
\end{equation}
of $\T$ is called {\em a statistically characterized} (shortly, {\em an s-characterized}) $($by $(a_n))$ {\em subgroup} of $\T$.
\end{definition}

The subsequent result justifies the investigation of this new notion of s-characterized subgroups, as it turns out that, though in general larger in size, these subgroups are still essentially topologically nice.

\begin{theorem}\label{theoremA}\cite[Theorem A]{DDB}
For any increasing sequence of integers $(a_n)$, the subgroup $t^s_{(a_n)}(\T)$ is a $F_{\sigma\delta}$ (hence, Borel) subgroup of $\T$ containing $t_{(a_n)}(\T)$.
\end{theorem}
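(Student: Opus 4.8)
The plan is to reduce statistical convergence to ordinary convergence of a sequence of \emph{continuous} functions, after which the Borel complexity becomes routine. For $t\in\T=\R/\Z$ write $\|t\|$ for the distance from $t$ to the nearest integer, so that $a_nx\to 0$ in $\T$ means $\|a_nx\|\to 0$, while $a_nx\to 0$ statistically means $d(\{n:\|a_nx\|\ge\eps\})=0$ for every $\eps>0$. The key observation I would isolate as a short lemma is that, because $0\le\|a_nx\|\le\tfrac12$ is bounded, statistical convergence of $(\|a_nx\|)$ to $0$ is equivalent to convergence to $0$ of its arithmetic (Cesàro) means
$$\phi_N(x):=\frac1N\sum_{n=0}^{N-1}\|a_nx\|.$$
Indeed, splitting the sum according to whether $\|a_nx\|<\eps$ or $\|a_nx\|\ge\eps$ gives
$$\eps\,\frac{|\{n<N:\|a_nx\|\ge\eps\}|}{N}\ \le\ \phi_N(x)\ \le\ \eps+\tfrac12\,\frac{|\{n<N:\|a_nx\|\ge\eps\}|}{N},$$
and letting $N\to\infty$ yields both implications. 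Consequently $t^s_{(a_n)}(\T)=\{x\in\T:\phi_N(x)\to 0\}$, and each $\phi_N$ is continuous on $\T$, being a finite sum of the continuous maps $x\mapsto\|a_nx\|$. Note that this reformulation also absorbs the ``for every $\eps$'' quantifier of statistical convergence into the single convergence statement $\phi_N(x)\to 0$.

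From this reformulation the algebraic and containment claims are immediate. For the subgroup property I would use that $\|\cdot\|$ is symmetric and subadditive, so $\phi_N(-x)=\phi_N(x)$, $\phi_N(0)=0$, and $\phi_N(x-y)\le\phi_N(x)+\phi_N(y)$; hence if $\phi_N(x)\to 0$ and $\phi_N(y)\to 0$ then $\phi_N(x-y)\to 0$, and $t^s_{(a_n)}(\T)$ is a nonempty subset closed under subtraction, i.e.\ a subgroup. For the containment $t_{(a_n)}(\T)\sub t^s_{(a_n)}(\T)$, if $x\in t_{(a_n)}(\T)$ then $\|a_nx\|\to 0$, and the Cesàro mean of a null sequence is null, so $\phi_N(x)\to 0$ and $x\in t^s_{(a_n)}(\T)$; equivalently, ordinary convergence trivially implies statistical convergence since finite sets have density zero.

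It remains to compute the Borel complexity, and here the continuity of the $\phi_N$ does the work. Since $\phi_N\ge 0$, convergence to $0$ can be written as
$$t^s_{(a_n)}(\T)=\bigcap_{m\ge 1}\ \bigcup_{N_0\ge 1}\ \bigcap_{N\ge N_0}\Big\{x\in\T:\phi_N(x)\le\tfrac1m\Big\}.$$
Each set $\{x:\phi_N(x)\le 1/m\}$ is closed because $\phi_N$ is continuous, so the inner intersection $\bigcap_{N\ge N_0}\{\phi_N\le 1/m\}$ is closed, the union over $N_0$ is $F_\sigma$, and the intersection over $m$ yields an $F_{\sigma\delta}$ (hence Borel) set. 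Together with the two previous claims this proves the statement.

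The one place where care is essential — the crux of the argument — is the reduction in the first paragraph. Working \emph{directly} with the density condition forces one to use the counting functions $x\mapsto\frac1N|\{n<N:\|a_nx\|\ge 1/k\}|$, which are only upper semicontinuous; their sublevel sets $\{\,\cdot\,\le 1/m\}$ are then open rather than closed, the inner intersections become $G_\delta$ instead of closed, and one lands in $G_{\delta\sigma\delta}$ rather than in $F_{\sigma\delta}$. Passing to the Cesàro means $\phi_N$ replaces these semicontinuous counting functions by continuous ones (and simultaneously eliminates the ``$\forall\eps$'' quantifier), which is exactly what saves the level of the Borel hierarchy needed to land in $F_{\sigma\delta}$. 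Thus the bounded statistical-versus-Cesàro equivalence, though elementary, is the step on which everything turns, and I would prove it in full detail.
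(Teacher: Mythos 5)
Your proof is correct, and it is essentially the same argument as the one behind the cited result: the present paper only quotes Theorem A from \cite{DDB} without reproducing its proof, and the proof there likewise rests on the equivalence, for the bounded sequence $(\|a_nx\|)$, of statistical convergence to $0$ with convergence of the Ces\`aro means $\phi_N(x)=\frac1N\sum_{n<N}\|a_nx\|$, followed by the representation $t^s_{(a_n)}(\T)=\bigcap_{m}\bigcup_{N_0}\bigcap_{N\ge N_0}\{x:\phi_N(x)\le 1/m\}$ with the sets closed by continuity of $\phi_N$. Your closing remark correctly identifies why this reduction is needed: the naive counting functions are only upper semicontinuous, and the Ces\`aro reformulation is exactly what keeps the set at the $F_{\sigma\delta}$ level.
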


The following theorem leads to a general assertion about the size of s-characterized subgroups for arithmetic sequences.

\begin{theorem}\label{theoremB}\cite[Theorem B]{DDB}
Let $(a_n)$ be an arithmetic sequence. Then $|t^s_{(a_n)}(\T)|=\mathfrak c$.
\end{theorem}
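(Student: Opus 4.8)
The plan is to prove the two inequalities separately; since $t^s_{(a_n)}(\T)$ is a subset of $\T$, the bound $|t^s_{(a_n)}(\T)| \le \mathfrak{c}$ is immediate, so all the work goes into producing $\mathfrak{c}$ many elements. First I would fix the canonical mixed-radix representation attached to the arithmetic sequence: writing $q_k = a_k/a_{k-1} \ge 2$, every $x \in \T$ (viewed in $[0,1)$) is $x = \sum_{k \ge 1} c_k/a_k$ with digits $0 \le c_k \le q_k - 1$. Because $a_k \mid a_n$ for $k \le n$, the initial terms of $a_n x$ are integers, so the only content lies in the tail, giving the congruence
$$a_n x \equiv y_n := \sum_{k > n} \frac{a_n c_k}{a_k} \pmod 1,$$
where I would record the telescoping estimate $\sum_{k > m} a_n c_k / a_k \le a_n \sum_{k>m}(a_{k-1}^{-1} - a_k^{-1}) = a_n/a_m$ for all $m \ge n$. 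In particular $y_n \in [0,1)$, and the portion of $y_n$ coming from indices beyond $n+J$ is at most $a_n/a_{n+J} = (q_{n+1}\cdots q_{n+J})^{-1} \le 2^{-J}$, a bound uniform in $n$.

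Then I would construct the elements. Fix once and for all an infinite set $D \subseteq \N$ of density zero (say $D = \{2^j\}$), and for each $\sigma \in \{0,1\}^D$ let $x_\sigma$ be the point whose digits are $c_k = \sigma(k)$ for $k \in D$ and $c_k = 0$ otherwise, i.e. $x_\sigma = \sum_{k \in D} \sigma(k)/a_k$. The claim is that every $x_\sigma$ lies in $t^s_{(a_n)}(\T)$. Given $\varepsilon > 0$, choose $J$ with $2^{-J} < \varepsilon$; whenever $(n, n+J] \cap D = \emptyset$ all the digits $c_{n+1}, \dots, c_{n+J}$ vanish, so by the tail estimate $y_n \le a_n/a_{n+J} \le 2^{-J} < \varepsilon$ and hence $\|a_n x_\sigma\| < \varepsilon$. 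The exceptional indices lie in $\bigcup_{m \in D}[m - J, m-1]$, a set of upper density at most $J\,\overline{d}(D) = 0$, so $d(\{n : \|a_n x_\sigma\| \ge \varepsilon\}) = 0$ for every $\varepsilon$, which is exactly statistical convergence $a_n x_\sigma \to 0$.

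Finally I would check that $\sigma \mapsto x_\sigma$ is injective — this is where the uniqueness of the representation (or simply the fast decay $a_{k'} \ge 2a_k$ for consecutive elements $k < k'$ of $D$, which rules out carries among $0/1$ digits) is used — yielding $|t^s_{(a_n)}(\T)| \ge |\{0,1\}^D| = \mathfrak{c}$ and, with the trivial upper bound, equality. The step I expect to be most delicate is the uniform tail control together with the verification that the bad-index set has density zero for each fixed $\varepsilon$; conceptually, the essential point is that one cannot simply invoke Theorem \ref{theoremA} and the size of $t_{(a_n)}(\T)$, because for bounded ratios (e.g. $a_n = 2^n$, where $t_{(a_n)}(\T)$ is the countable Pr\"ufer-type group $\Z(2^\infty)$) the ordinary characterized subgroup is only countable, so the genuine enlargement furnished by statistical convergence is precisely what the construction above exploits.
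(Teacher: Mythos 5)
Your proof is correct: the trivial upper bound, the telescoping tail estimate $a_n/a_{n+J}\le 2^{-J}$ (uniform in $n$ because every ratio is at least $2$), the observation that the bad indices for a given $\varepsilon$ lie in the finitely many translates $\bigcup_{i=1}^{J}(D-i)$ of the density-zero set $D$, and injectivity via uniqueness of the canonical representation (Fact \ref{lemmanew}) together give $\mathfrak{c}$ many elements of $t^s_{(a_n)}(\T)$. The paper itself only quotes this theorem from [DDB] without proof, but your construction — continuum many points supported on a fixed sparse set of digit positions, with the $\{0,1\}^{\mathbb{N}}$-indexed family separated by the canonical representation — is essentially the same template the paper uses to prove its analogous Theorem \ref{sconthmain1}, the only cosmetic difference being that you vary $0/1$ digits on a fixed support set while the paper varies the support itself through binary choices.
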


As a consequence, we find the more general result that the new subgroup $t^s_{(a_n)}(\T)$ always
differs from the subgroup $t_{(a_n)}(\T)$ for arithmetic sequences.

\begin{theorem}\label{theoremC}\cite[Theorem C]{DDB}
For any arithmetic sequence $(a_n)$, $t^s_{(a_n)}(\T)\neq t_{(a_n)}(\T)$.
\end{theorem}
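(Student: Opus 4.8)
The plan is to exhibit a single element $x\in\T$ lying in $t^s_{(a_n)}(\T)$ but not in $t_{(a_n)}(\T)$; since Theorem~\ref{theoremA} guarantees the inclusion $t_{(a_n)}(\T)\subseteq t^s_{(a_n)}(\T)$, one such witness already forces the two subgroups to differ. I would first fix the canonical base representation attached to the arithmetic sequence: writing $q_n=a_{n+1}/a_n\ge 2$, every $x\in[0,1)$ has an expansion $x=\sum_{k\ge 1}c_k/a_k$ with digits $0\le c_k<q_{k-1}$, and since $a_n/a_k\in\Z$ for $k\le n$, a direct computation gives, modulo $1$,
\[
a_n x\equiv \sum_{j\ge 1}\frac{c_{n+j}}{q_n q_{n+1}\cdots q_{n+j-1}},
\]
whose leading term is $c_{n+1}/q_n$. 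Thus the size of $a_n x$ in $\T$ is governed by the digit $c_{n+1}$ together with a rapidly shrinking tail.

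Next I would choose an infinite set $S\subseteq\N$ of natural density zero whose consecutive gaps tend to infinity (for instance $S=\{2^i:i\ge 1\}$), and define $x$ by setting $c_k=\lfloor q_{k-1}/2\rfloor$ for $k\in S$ and $c_k=0$ otherwise. For $k\in S$ the leading term $c_k/q_{k-1}$ lies in $[\tfrac13,\tfrac12]$, and the tail is negligible because the next nonzero digit is far away, so the distance to the nearest integer satisfies $\|a_{k-1}x\|\ge\tfrac14$ for infinitely many indices; hence $a_n x\not\to 0$ and $x\notin t_{(a_n)}(\T)$. Conversely, when $n+1\notin S$ the first nonzero digit past position $n$ sits at a distance growing with $n$ (because the gaps of $S$ grow), so the whole tail displayed above is majorised by a geometric quantity tending to $0$; therefore $\|a_n x\|\to 0$ as $n$ runs through the complement of $S-1=\{k-1:k\in S\}$. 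Since $S-1$ has density zero, for every $\eps>0$ the exceptional set $\{n:\|a_n x\|\ge\eps\}$ is contained in $(S-1)$ together with finitely many points, whence $a_n x\to 0$ statistically and $x\in t^s_{(a_n)}(\T)$.

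The delicate point, and the reason the proof must be constructive, is that one cannot separate the two subgroups by size alone. Although Theorem~\ref{theoremB} yields $|t^s_{(a_n)}(\T)|=\mathfrak c$, the ordinary characterized subgroup $t_{(a_n)}(\T)$ is itself of cardinality $\mathfrak c$ whenever $\sup_n q_n=\infty$; it is only countable in the bounded-ratio case (e.g.\ $a_n=2^n$, where $t_{(2^n)}(\T)$ is the group of dyadic rationals). A cardinality argument combined with Theorem~\ref{theoremB} therefore settles the bounded-ratio case immediately, but a uniform statement requires an explicit witness.

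Accordingly, the main obstacle is the tail estimate underlying the second paragraph: verifying that a density-zero yet infinite scattering of ``middle'' digits pushes $a_n x$ away from $0$ exactly on the density-zero index set $S-1$, while still forcing $a_n x\to 0$ along its complement. This is precisely where the growth of the gaps of $S$ is essential, since it makes the contribution of each distant bad digit to $a_n x$ negligible; a set $S$ chosen too dense would spoil either the density-zero conclusion or the convergence along the complement, and the quantitative balance between these two requirements is the heart of the argument.
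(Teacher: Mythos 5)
Your overall plan---an explicit witness $x$ whose digits are $c_k=\lfloor b_k/2\rfloor$ on a density-zero set $S$ with growing gaps and $0$ elsewhere---is the right one, and it is essentially how this statement is established in \cite{DDB} (the present paper does not reprove Theorem \ref{theoremC}; it only quotes it alongside Theorem \ref{theoremB}). Your side remark is also correct and important: the statement of Theorem \ref{theoremB} alone cannot settle the case $\sup_n b_n=\infty$, since then $|t_{(a_n)}(\T)|=\mathfrak c$ as well, so a construction is unavoidable. Likewise your verification that $x\notin t_{(a_n)}(\T)$ is sound: for $k\in S$ the leading term $c_k/b_k$ lies in $[\frac13,\frac12]$ and the tail is $o(1)$ precisely because the next element of $S$ is far beyond $k$.

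There is, however, a genuine error at exactly the step you call the heart of the argument. The assertion that ``when $n+1\notin S$ the first nonzero digit past position $n$ sits at a distance growing with $n$'' is false: that digit sits at the next element of $S$, which can be just two places beyond $n$ no matter how large $n$ is (take $n\in S-2$ with $n+1\notin S$). Concretely, for $a_n=2^n$ and $S=\{2^i:i\geq 1\}$ your $x$ has $c_k=1$ for $k\in S$, and for large $n\in S-2$ one computes $\{a_nx\}=\frac14+o(1)$; hence $\|a_nx\|\not\to 0$ along $\N\setminus(S-1)$, and for $\eps<\frac14$ the exceptional set $\{n:\|a_nx\|\geq\eps\}$ contains the infinite set $S-2$, so it is \emph{not} contained in $(S-1)$ plus finitely many points. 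The repair is standard but must be stated: if $n+1\notin S$ and $m$ is the least element of $S$ exceeding $n+1$, then Eq (\ref{eqsum}) gives $\{a_nx\}\leq a_n/a_{m-1}\leq 2^{-(m-n-1)}$, so for a fixed $\eps>0$ the exceptional set is contained, up to finitely many terms, in the finite union $\bigcup_{j=1}^{J}(S-j)$ with $J=\lfloor\log_2(1/\eps)\rfloor+1$. Each translate $S-j$ has density zero, hence so does the union, which is precisely statistical convergence $a_nx\to 0$. With this emendation your witness does lie in $t^s_{(a_n)}(\T)\setminus t_{(a_n)}(\T)$ and the proof is complete; as written, though, the quantitative claim underpinning membership in $t^s_{(a_n)}(\T)$ is wrong whenever the ratios $(b_n)$ are bounded.
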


Unlike Theorem A, both Theorem B and Theorem C rely on  arithmetic sequence. So, naturally the following problem was asked in \cite{DDB}.

\begin{problem}\label{prob1}\cite[Question 6.3]{DDB}
Do Theorems B and C hold true for arbitrary sequences $(u_n)$?
\end{problem}
Our next observation was carried out in \cite{DG} which had provided several examples of uncountable $F_{\sigma\delta}$ subgroups that cannot be characterized.
\begin{theorem}\label{ththin}\cite[Theorem 2.7]{DG}
For any arithmetic sequence $(a_n)$, the subgroup $t^s_{(a_n)}(\T)$ cannot be characterized.
\end{theorem}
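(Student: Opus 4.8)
The plan is to reduce the statement to a condition on the $a$-adic digits of points of $\T$, and then to obstruct characterizability by exploiting the descriptive structure of the density ideal $\mathcal{I}_d$. First I would fix the arithmetic data: writing $q_n:=a_n/a_{n-1}$ (with $a_0=1$), every $x\in\T$ has a unique expansion $x=\sum_{n\ge1}c_n/a_n$ with $0\le c_n<q_n$, and a direct computation of $a_Nx$ modulo $1$ shows that its fractional part is governed by the leading tail digit $c_{N+1}/q_{N+1}$. This yields the correspondences
$$
x\in t_{(a_n)}(\T)\iff \|c_n/q_n\|_\T\to 0,\qquad x\in t^s_{(a_n)}(\T)\iff \|c_n/q_n\|_\T\to 0\ \text{statistically},
$$
where $\|\cdot\|_\T$ denotes the distance to the nearest integer. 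Thus membership in the two subgroups becomes an ordinary (resp.\ $\mathcal{I}_d$-) $c_0$-condition on the normalized digit sequence $(\|c_n/q_n\|_\T)$, and Theorem~\ref{theoremC} guarantees that the containment $t_{(a_n)}(\T)\subsetneq t^s_{(a_n)}(\T)$ is proper.

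The next step is the structural decomposition. Using the equivalent description of statistical convergence recalled after Definition~\ref{Def1}, I would write
$$
t^s_{(a_n)}(\T)=\bigcup_{A\in\mathcal{I}_d}H_A,\qquad H_A:=t_{(a_n)_{n\notin A}}(\T),
$$
so that $t^s_{(a_n)}(\T)$ is a union of the \emph{characterized} subgroups $H_A$ obtained by deleting a density-zero index set. Each $H_A$ is proper in $t^s_{(a_n)}(\T)$ (take $c_n$ near $q_n/2$ along a density-zero set meeting $\N\setminus A$ infinitely often). The decisive point is that $\mathcal{I}_d$ is a \emph{P-ideal}: given countably many $A_k\in\mathcal{I}_d$ there is a single $A^*\in\mathcal{I}_d$ with $A_k\subseteq^* A^*$ for all $k$, whence $H_{A_k}\subseteq H_{A^*}\subsetneq t^s_{(a_n)}(\T)$. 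Consequently the directed family $\{H_A\}$ admits no countable cofinal subfamily, so $t^s_{(a_n)}(\T)$ is \emph{not} the union of countably many of the $H_A$.

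Finally I would derive the contradiction. Assume $t^s_{(a_n)}(\T)=t_{(b_n)}(\T)$ for some sequence $(b_n)$. Invoking the known fact that every characterized subgroup is Polishable, endow $H:=t_{(b_n)}(\T)$ with its finer Polish group topology $\tau$; since the inclusion $(H,\tau)\hookrightarrow\T$ is continuous and each $H_A$, being characterized, is an $F_{\sigma\delta}$ (hence Borel) subgroup of $\T$, each $H_A$ is a $\tau$-Borel subgroup of the Polish group $(H,\tau)=t^s_{(a_n)}(\T)$. A Baire-category argument in $(H,\tau)$, fed by the P-ideal directedness of $\{H_A\}$, should then force one $H_{A^*}$ to be non-meager and hence to exhaust $H$, contradicting its properness. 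The main obstacle is precisely this last step: one must control the category (ideally the $\tau$-closedness or nowhere-density) of the pieces $H_A$ inside the hypothetical Polish topology and convert the \emph{uncountable, non-countably-reducible} directedness of the density ideal into a genuine Baire contradiction. I expect this to require the finer descriptive structure of $\mathcal{I}_d$ as the exhaustive ideal of a lower semicontinuous submeasure (Solecki's representation of analytic P-ideals), which is where the real work of the proof will lie.
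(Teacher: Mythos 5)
Your plan diverges from the actual proof and has two defects, one of which is fatal as written. First, the opening ``digit criterion'' $x\in t_{(a_n)}(\T)\iff \|c_n/q_n\|\to 0$ (and its statistical analogue) is false in general, because it ignores carry phenomena. Take $a_n=2^n$ and let $x$ have binary digits equal to $1$ exactly on blocks $[m_k,M_k]$ with $M_k-m_k\to\infty$, $m_{k+1}-M_k\to\infty$, and with the union of the blocks of positive upper density. For $n$ inside a block, $\{a_{n-1}x\}=1-2^{-(M_k-n+1)}+O(2^{-(m_{k+1}-n)})$, so $\|a_{n-1}x\|$ is small; for fixed $\eps$ the positions where $\|a_{n-1}x\|\geq\eps$ lie within distance $\log_2(1/\eps)+2$ of the block ends $M_k$ and the block starts $m_{k+1}$, a set of density zero. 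Hence $x\in t^s_{(2^n)}(\T)$ even though $\|c_n/q_n\|=\tfrac12$ on a set of positive upper density (elements of exactly this kind lie behind Theorem \ref{theoremB}). Your decomposition $t^s_{(a_n)}(\T)=\bigcup_{A\in\mathcal{I}_d}H_A$ with $H_A=t_{(a_n)_{n\notin A}}(\T)$ does not actually need this false equivalence, and that part, together with the P-ideal ($\sigma$-directedness) observation, is correct.

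The fatal problem is the endgame, which you yourself flag as unproved. Non-meagerness of some $H_{A^*}$ does not follow: Baire category excludes \emph{countable} unions of meager sets, your union has $\mathfrak{c}$ many pieces, and $\sigma$-directedness does not repair this. Indeed the proposed mechanism is unsound in general: the Polish group $\R$ is the union of the family $\{\langle \Q\cup F\rangle: F\subseteq\R \mbox{ countable}\}$, whose members are proper countable (hence Borel, meager) subgroups and which is $\sigma$-directed, since countably many members are all contained in the single member generated by the union of their generating sets. So P-ideal directedness plus Polishability of a hypothetical characterizing topology yields no contradiction without substantial further input specific to characterized subgroups; that missing input \emph{is} the proof. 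The actual argument (visible in this paper through Lemma \ref{lg1}, Lemma \ref{arbault lemma} and the proof of Theorem \ref{thnovel}, which is the $(d_n)$-analogue of Theorem \ref{ththin}) is elementary and constructive: if $t^s_{(a_n)}(\T)\subseteq t_{(u_n)}(\T)$, then since each $1/a_n$ lies in $t_{(a_n)}(\T)\subseteq t^s_{(a_n)}(\T)$, the characterizing sequence is forced to have the form $u_n=a_{k_n}v_n$ with $k_n\to\infty$ and $b_{k_n+1}\nmid v_n$; one then builds $x$ supported on a sparse set $\{k_{s_i}+1\}$ with digits $c_{k_{s_i}+1}=\lfloor b_{k_{s_i}+1}/m_{k_{s_i}+1}\rfloor$ chosen adversarially against the residues of $v_{s_i}$, so that $\{u_{s_i}x\}\in[\tfrac14,\tfrac78]$ for all $i$ (hence $x\notin t_{(u_n)}(\T)$), while the sparseness of the support keeps $x\in t^s_{(a_n)}(\T)$. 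No descriptive set theory enters at all.
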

Motivated by these observations, a series of open problems were recently posed in \cite{DDBH}.
\begin{problem}\label{prob2}\cite[Question 7.16]{DDBH}
Does there exist an infinite statistically characterized subgroup that can be characterized?
\end{problem}
\begin{problem}\label{prob3}\cite[Question 7.17]{DDBH}
Does there exist an increasing sequence $(u_n)$ such that $t^s_{(u_n)}(\T)$ is countably infinite?
\end{problem}

Coming back to the literature regarding characterized subgroups, in an interesting departure, a non-arithmetic sequence $(\zeta_n)$ was defined in \cite{DK} as follows:
\begin{equation}\label{eqnonarith}
1,2,4,6,12, 18, 24,  \ldots, n!, 2\cdot n!, 3 \cdot n!, \ldots , n \cdot n!, (n+1)!, \ldots
\end{equation}
It was established in \cite{DK} that $t_{(\zeta_n)}(\T) = \Q/\Z$. Motivated by this observation, for an arithmetic sequence $(a_n)$ the following general class of non-arithmetic sequences was introduced in \cite{DG8}. Let $(d_n^{a_n})$ be an increasing sequence of integers formed by the elements of the set,
\begin{equation}\label{nonarithdef}
\{ra_k \ : \ 1\leq r< b_{k+1}\}.
\end{equation}
When there is no confusion regarding the sequence $(a_n)$, we simply denote this sequence by $(d_n)$. Note that for $a_n=n!$ corresponding non-arithmetic sequence $(d_n)$ coincides with the sequence $(\zeta_n)$. A through investigation regarding the subgroup $t_{(d_n)}(\T)$, in particular cardinality and structure related questions and its relation with $t_{(a_n)}(\T)$ was carried out in \cite{DG8}.

As a natural consequence of the works done in \cite{DG8} and \cite{DDB}, the subgroup $t^s_{(d_n)}(\T)$ was considered in \cite{DG9}. One of the main results of \cite{DG9} was the following interesting observation, which provides a negative solution to Question 6.6 posed in \cite{DDB}.
\begin{theorem}\label{sconth}\cite[Theorem 2.14]{DG9}
Let $(a_n)$ be an arithmetic sequence such that for each $m\in\N$, $\lim\limits_{n\to\infty}\frac{\sum\limits_{i=0}^{m-1} (b_{n-i}-1)}{\sum\limits_{i=1}^n (b_i-1)}=0$. Then $|t^s_{(d_n)}(\T)|=\mathfrak{c}$.
\end{theorem}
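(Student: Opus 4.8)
The plan is to establish the nontrivial inequality $|t^s_{(d_n)}(\T)|\ge\mathfrak{c}$, the reverse being automatic from $t^s_{(d_n)}(\T)\subseteq\T$. Writing $b_k=a_k/a_{k-1}$, so that $a_k=b_1b_2\cdots b_k$ and $b_k\ge 2$ for every $k$, recall that $(d_n)$ is the concatenation of the blocks $\mathcal{B}_k=(a_k,2a_k,\ldots,(b_{k+1}-1)a_k)$, where $\mathcal{B}_k$ has $b_{k+1}-1$ entries and the first $n$ blocks occupy the first $S_n:=\sum_{i=1}^n(b_i-1)$ positions of $(d_n)$. I would represent each $x\in\T$ by its canonical mixed-radix digits $x=\sum_{k\ge 1}c_k/a_k$ with $0\le c_k\le b_k-1$, set $\theta_k:=a_kx\bmod 1\in[0,1)$, and record the recursion $\theta_k=(c_{k+1}+\theta_{k+1})/b_{k+1}$. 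The value attached to the entry $ra_k$ of $\mathcal{B}_k$ is then $\|r\theta_k\|$, where $\|y\|$ denotes the distance from $y$ to the nearest integer; thus $x\in t^s_{(d_n)}(\T)$ means exactly that for every $\eps>0$ the set of pairs $(k,r)$ with $1\le r\le b_{k+1}-1$ and $\|r\theta_k\|\ge\eps$ has density zero among all positions of $(d_n)$.

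The first key step is a \emph{locality} estimate, controlling how far a single nonzero digit can disturb the blocks preceding it. If $c_{j_0}\neq 0$ and all other digits vanish, the recursion gives $\theta_k=c_{j_0}/(b_{k+1}\cdots b_{j_0})$ for $k<j_0$, whence every entry of $\mathcal{B}_k$ satisfies
$$
\|r\theta_k\|\le\frac{(b_{k+1}-1)\,c_{j_0}}{b_{k+1}\cdots b_{j_0}}<\frac{1}{b_{k+2}\cdots b_{j_0-1}}\le 2^{\,2-(j_0-k)}.
$$
Consequently, given $\eps>0$ and putting $L=L(\eps):=\lceil\log_2(1/\eps)\rceil+2$, the block $\mathcal{B}_k$ is entirely ``good'' (all entries $<\eps$) as soon as $k\le j_0-L$, so only the at most $L$ blocks $\mathcal{B}_{j_0-1},\ldots,\mathcal{B}_{j_0-L}$ can be corrupted. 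For a general digit pattern the subadditivity of $\|\cdot\|$ bounds each entry of $\mathcal{B}_k$ by the sum of the single-digit contributions of the nonzero digits to the right of $k$; hence $\mathcal{B}_k$ remains good provided no nonzero digit occurs in positions $k+1,\ldots,k+L$, after a harmless enlargement of $L$ to absorb the geometric tail coming from farther digits.

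With this in hand I would fix an increasing set of positions $P=\{p_1<p_2<\cdots\}$ chosen sparse enough that $S_{p_{i+1}}\ge 2\,S_{p_i}$ for all $i$ (possible since $S_n\to\infty$), and define for each $\sigma\in\{0,1\}^{\N}$ the point $x_\sigma:=\sum_{i:\,\sigma_i=1}1/a_{p_i}$ (digit $1$ at $p_i$ when $\sigma_i=1$, all other digits $0$). Distinct $\sigma$ give distinct $x_\sigma$ by uniqueness of the mixed-radix expansion, so this is a family of size $\mathfrak{c}$. For any fixed $\eps>0$, the locality estimate shows that for \emph{every} $\sigma$ the bad positions of $x_\sigma$ lie in $\bigcup_i\big(\mathcal{B}_{p_i-1}\cup\cdots\cup\mathcal{B}_{p_i-L(\eps)}\big)$, a set independent of $\sigma$; its $i$-th window has size $\sum_{s=0}^{L-1}(b_{p_i-s}-1)$, which the hypothesis (applied with $m=L$) forces to be $o(S_{p_i})$, and together with the doubling of $(S_{p_i})$ this makes the whole union have density zero. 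Carrying this out for every $\eps$ then places all $x_\sigma$ in $t^s_{(d_n)}(\T)$ and yields the theorem. The hard part is precisely this final density bookkeeping: one must verify that the union over the infinitely many windows around $P$ keeps density zero for \emph{every} $\eps$ at once, and it is exactly here that the growth hypothesis on $(b_n)$ --- ruling out any fixed group of consecutive blocks carrying positive density --- is indispensable, since without it the corruption caused by even a single sacrificed digit could already have positive density.
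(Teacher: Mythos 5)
Your proposal is correct and is essentially the paper's own argument in inlined form: the paper splits it into showing that the hypothesis yields an infinite sparse set $A$ with $d(L(A-m))=0$ for every $m$ (Proposition \ref{prowdliqbound} and its corollary, the ``weakly dli'' property) and then building continuum many elements supported inside $A$ via the same tail estimate $r\{a_kx\}\le \frac{ra_k}{a_{k+m-1}}\le \frac{1}{2^{m-2}}$ (Theorem \ref{sconthmain1}), which is precisely your locality estimate combined with the family $x_\sigma$ indexed by $\{0,1\}^{\N}$. Your doubling condition $S_{p_{i+1}}\ge 2S_{p_i}$ and the ensuing window bookkeeping play exactly the role of the paper's recursive construction of the sparse set and its computation of $\bar{d}(B_m)$.
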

Motivated by this observation, the following open problem was posed in \cite{DG9}.
\begin{problem}\label{prob4}\cite[Problem 2.16]{DG9}
Is $|t^s_{(d_n)}(\T)|=\mathfrak{c}$, for an arbitrary arithmetic sequence of integers $(a_n)$?
\end{problem}
In this article, our primary focus is on the ``statistically characterized subgroups" generated by the sequences $(d_n)$, i.e., $t^s_{(d_n)}(\T)$ and to understand the cardinality pattern of the statistically characterized subgroup generated by the sequence $(d_n)$. The main tool in our quest is the interesting new idea of a lifting function $L$ corresponding to a subsequence $(a_{n_k})$ of a sequence of positive integers $(a_n)$. More precisely the lifting function $L \ : \ \mathcal{P}(\N) \to \mathcal{P}(\N)$ is defined as
$$
L(A)=\bigcup\limits_{k\in A} [n_{k-1},n_{k}-1]
$$
for any $A \subset \mathbb{N}$. In Theorem \ref{sconthmain1}, we have been able to show that for arithmetic sequences satisfying some specific properties, they are always of size $\mathfrak{c}$. This in turn provides a more general view of \cite[Theorem 2.14]{DG9}. For example it can be seen that  $|t^{s}_{(d_n)}{(\mathbb{T})}|=\mathfrak{c}$ for the sequence $(n!)$. Next we dwell upon Problem \ref{prob2} (stated above) looking for instances where one can find a positive solution. This brings us to the main observation of this article, namely, Theorem \ref{sconthmain2} which states that for arithmetic sequences satisfying certain properties, the subgroup $t^s_{(d_n)}(\T)$ actually coincides with the subgroup $t_{(d_n)}(\T)$. As a corollary of this result we obtain another interesting observation that these statistically characterized subgroups are, in fact, countable (see Corollary \ref{sconcoromain2}). For a concrete example one can take the arithmetic sequence $(2^{\frac{n(n+1)}{2}})$. Therefore, Theorem \ref{sconthmain2} provides a positive solution to Problem \ref{prob2} whereas Corollary \ref{sconcoromain2} provides a negative solution to Problem \ref{prob1} and Problem \ref{prob4} while providing a positive solution to Problem \ref{prob3}. Finally, we conclude this article by presenting some characterizable conditions and open problems.

In this article by $(d_n)$ we would always mean the sequence defined in Eq (\ref{nonarithdef}) corresponding to the arithmetic sequence $(a_n)$ unless otherwise stated.

\section{Notation and terminology.\vspace{.3cm} \\ }
Throughout $\R$, $\Q$, $\Z$, $\P$ and $\N$ will stand for the set of all real numbers, the set of all rational numbers,
the set of all integers, the set of primes and the set of all natural numbers (note that we do not consider zero as a natural number) respectively. The first three are equipped with their usual abelian group structure and the circle group $\T$ is identified with the quotient group $\R/\Z$ of $\R$ endowed with its usual compact topology.

Following \cite{Ka}, we may identify $\T$ with the interval [0,1] identifying 0 and 1. Any real valued function $f$ defined on $\T$ can be identified with a periodic function defined on the whole real line $\R$ with period 1, i.e., $f(x+1)=f(x)$ for every real $x$. When referring to a set $X\subseteq \T$ we assume that $X\subseteq [0,1]$ and $0\in X$ if and only if $1\in X$. For a real $x$, we denote its fractional part by $\{x\}$ and the distance from the integers by $\|x\|=\min\big\{\{x\},1-\{x\}\big\}$.

For arithmetic sequences, the following facts (see \cite{DG9}) will be used in this sequel time and again. So, before moving onto our main results here we recapitulate that once.
\begin{fact}\label{lemmanew}\cite{DI1}
For any arithmetic sequence $(a_n)$ and $x\in\T$, we can find a unique sequence $c_n\in [0,b_n-1]$ such that
\begin{equation}\label{canonical:repr}
x=\sum\limits_{n=1}^{\infty}\frac{c_n}{a_n},
\end{equation}
where $c_n<b_n-1$ for infinitely many $n$.
\end{fact}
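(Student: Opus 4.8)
The plan is to construct the digits $c_n$ by a greedy (floor) algorithm and then verify the three requirements—membership in the correct range, convergence of the series to $x$, and uniqueness—separately, treating the non-terminating clause as the delicate point. Since $(a_n)$ is arithmetic, write $b_n = a_n/a_{n-1}$, so each $b_n\ge 2$ is an integer and $a_n = b_1 b_2\cdots b_n$.

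For existence I would set $N_n := \lfloor a_n x\rfloor$ (with $N_0 = 0$ since $x\in[0,1)$) and define $c_n := N_n - b_n N_{n-1}$. Writing $a_n x = b_n a_{n-1}x = b_n N_{n-1} + b_n\{a_{n-1}x\}$ and using $\{a_{n-1}x\}\in[0,1)$, one reads off $c_n = \lfloor b_n\{a_{n-1}x\}\rfloor\in\{0,1,\dots,b_n-1\}$, giving the range condition. The definition is engineered so that the partial sums telescope: because $b_n N_{n-1}/a_n = N_{n-1}/a_{n-1}$, one gets $\sum_{n=1}^{M} c_n/a_n = N_M/a_M = \lfloor a_M x\rfloor/a_M$, which tends to $x$ as the error $\{a_M x\}/a_M\le 1/a_M\to 0$. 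This shows the series converges to $x$.

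The main obstacle is the clause ``$c_n < b_n-1$ for infinitely many $n$'', which excludes the analogue of the decimal ambiguity $0.0999\dots = 0.1$. I would argue by contradiction: if $c_n = b_n-1$ for all $n\ge N$, then using $\frac{b_n-1}{a_n} = \frac{1}{a_{n-1}}-\frac{1}{a_n}$ the tail telescopes to $\sum_{n\ge N}\frac{b_n-1}{a_n} = \frac{1}{a_{N-1}}$, forcing $x = \frac{N_{N-1}+1}{a_{N-1}}$; but then $a_{N-1}x\in\Z$, so $\{a_{N-1}x\}=0$ and the formula $c_N = \lfloor b_N\{a_{N-1}x\}\rfloor$ yields $c_N = 0\neq b_N-1$ (here $b_N\ge 2$ is essential). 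Thus the greedy digits automatically meet the non-terminating requirement.

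For uniqueness I would use the standard first-difference argument. If two admissible digit sequences $(c_n)$ and $(c_n')$ represent the same $x$ and first disagree at index $m$ with, say, $c_m > c_m'$, then the telescoping bound gives $\sum_{n\ge m} c_n'/a_n \le c_m'/a_m + \sum_{n>m}\frac{b_n-1}{a_n} = (c_m'+1)/a_m$, the inequality being strict precisely because infinitely many $c_n' < b_n-1$; since $\sum_{n\ge m} c_n/a_n\ge c_m/a_m\ge(c_m'+1)/a_m$, the two tails cannot coincide, a contradiction. Hence the representation is unique. Overall, I expect the only genuinely subtle point to be pinning down where the non-terminating hypothesis enters—both in excluding the all-$(b_n-1)$ tail for existence and in securing the strict inequality for uniqueness.
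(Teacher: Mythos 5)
The paper never proves this statement: it is imported as a known fact from \cite{DI1}, so there is no internal proof to compare your argument against; your proposal in effect supplies the missing proof, and it is correct and complete. The greedy digits $c_n=\lfloor b_n\{a_{n-1}x\}\rfloor$ indeed lie in $[0,b_n-1]$ because $b_n\geq 2$ is an integer and $\{a_{n-1}x\}\in[0,1)$; the telescoping identity $\sum_{n=1}^{M}c_n/a_n=\lfloor a_Mx\rfloor/a_M=x-\{a_Mx\}/a_M$ gives convergence to $x$; your contradiction ruling out a tail with $c_n=b_n-1$ for all $n\geq N$ is sound (the tail telescopes to $1/a_{N-1}$, forcing $a_{N-1}x\in\Z$, whence the greedy formula gives $c_N=0\neq b_N-1$, which is exactly where $b_N\geq 2$ enters); and the first-disagreement estimate for uniqueness is the standard one, with the strict inequality correctly charged to the non-terminating clause applied to the sequence that is smaller at the first disagreement (only that sequence's admissibility is needed there). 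Two cosmetic points: since the paper identifies $\T$ with $[0,1]$ with endpoints identified, you should state explicitly that you work with the representative $x\in[0,1)$ (this is what makes $N_0=0$ and hence the telescoping base case valid), and in the uniqueness step it is worth one line to note that equality of the full sums plus equality of the heads before index $m$ forces equality of the tails from $m$ on, which is what your strict inequality contradicts.
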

%

For $x\in\T$ with canonical representation (\ref{canonical:repr}), we define
\begin{itemize}
\item[$\bullet$] $supp(x) = \{n\in \N: c_n \neq 0\}$,
\item[$\bullet$] $supp_q(x)=\{n\in\N\ : \ c_n=b_n-1\}$.
\end{itemize}
For an arithmetic sequence of integers $(a_n)$, the sequence of ratios $(b_n)$ is defined as
$$
b_1=a_1 \mbox{     \ and \      } b_n=\frac{a_n}{a_{n-1}}  \mbox{ for  } n \geq 2.
$$
Note that, $A\subseteq\N$ is called
\begin{itemize}
\item[(i)] $b$-bounded if the sequence of ratios $(b_n)_{n\in A}$ is bounded.
\item[(ii)] $b$-divergent if the sequence of ratios $(b_n)_{n\in A}$  diverges to $\infty$.
\end{itemize}
We say $(a_n)$ is $b$-bounded ($b$-divergent) if $\N$ is $b$-bounded ($b$-divergent). Also, note that for each $j\in\N$,
\begin{equation}\label{eqsum}
\sum\limits_{i=j}^\infty \frac{c_i}{a_i} \leq \sum\limits_{i=j}^\infty \frac{b_i-1}{a_i} = \sum\limits_{i=j}^\infty \bigg(\frac{1}{a_{i-1}} - \frac{1}{a_i} \bigg) \leq \frac{1}{a_{j-1}}.
\end{equation}
\begin{lemma}\label{uconlmain}\cite[Lemma 3.1]{DR}
For $x\in\T$ with canonical representation (\ref{canonical:repr}), for every natural $n > 1$ and every non-negative integer $t$,
\begin{equation}\label{eqlemain1}
\{a_{n-1}x\}=\frac{c_n}{b_n}+\frac{c_{n+1}}{b_nb_{n+1}}+\ldots+\frac{c_{n+t}}{b_n\ldots b_{n+t}}+\frac{\{a_{n+t}x\}}{b_n\ldots b_{n+t}}.
\end{equation}
In particular, for $t=1$, we get
\begin{equation}\label{eqlemain2}
\{a_{n-1}x\}=\frac{c_n}{b_n}+\frac{c_{n+1}}{b_nb_{n+1}}+\frac{\{a_{n+1}x\}}{b_nb_{n+1}}.
\end{equation}
\end{lemma}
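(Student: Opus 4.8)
The plan is to first establish the closed-form series expansion
$$
\{a_{n-1}x\}=\sum_{k=n}^{\infty}\frac{c_k}{b_nb_{n+1}\cdots b_k},
$$
valid for every natural $n>1$, and then to deduce (\ref{eqlemain1}) by peeling off the first $t+1$ summands and recognizing the leftover tail as a rescaled copy of $\{a_{n+t}x\}$. The identity (\ref{eqlemain2}) is then just the case $t=1$.

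First I would multiply the canonical representation (\ref{canonical:repr}) by $a_{n-1}$ and split the sum at the index $n$. Since $(a_n)$ is arithmetic, $a_k\mid a_{n-1}$ for every $k\leq n-1$, so each term $a_{n-1}\cdot\frac{c_k}{a_k}=c_k\cdot\frac{a_{n-1}}{a_k}$ with $k\leq n-1$ is an integer and contributes nothing to the fractional part. For $k\geq n$ one has $a_k=a_{n-1}\,b_nb_{n+1}\cdots b_k$, whence $a_{n-1}\cdot\frac{c_k}{a_k}=\frac{c_k}{b_n\cdots b_k}$. Thus $a_{n-1}x$ equals an integer plus the displayed tail series.

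The crucial step is to check that this tail series lies in $[0,1)$, so that it is genuinely the fractional part and not merely congruent to $a_{n-1}x$ modulo $1$. Non-negativity is immediate since $c_k\geq 0$. For the strict upper bound I would use the telescoping estimate behind (\ref{eqsum}): writing $B_k=b_n\cdots b_k$ with the empty-product convention $B_{n-1}=1$, one has $\frac{b_k-1}{B_k}=\frac{1}{B_{k-1}}-\frac{1}{B_k}$, so that $\sum_{k=n}^{\infty}\frac{b_k-1}{B_k}=1$. The defining property in Fact \ref{lemmanew}, namely that $c_k<b_k-1$ for infinitely many $k$, then upgrades the bound $\sum_{k=n}^{\infty}\frac{c_k}{B_k}\leq 1$ to a strict inequality. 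This is the one place where the canonical-representation hypothesis is indispensable, and it is the main (if mild) obstacle: without it the tail could sum to $1$ and the identity would fail precisely at points of finite support.

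With the series expansion in hand, the remainder is bookkeeping. I would write
$$
\sum_{k=n}^{\infty}\frac{c_k}{b_n\cdots b_k}=\sum_{k=n}^{n+t}\frac{c_k}{b_n\cdots b_k}+\frac{1}{b_n\cdots b_{n+t}}\sum_{k=n+t+1}^{\infty}\frac{c_k}{b_{n+t+1}\cdots b_k},
$$
using $b_n\cdots b_k=(b_n\cdots b_{n+t})(b_{n+t+1}\cdots b_k)$, and then apply the already-proved expansion with $n$ replaced by $n+t+1$ to identify the last sum as $\{a_{n+t}x\}$. This gives (\ref{eqlemain1}) at once. As an alternative one could avoid the infinite series and prove (\ref{eqlemain1}) by induction on $t$, the base case $t=0$ reading $\{a_{n-1}x\}=\frac{c_n}{b_n}+\frac{\{a_nx\}}{b_n}$ and the inductive step following by applying that base case to $a_nx$; but the direct route above makes the role of the canonical representation most transparent.
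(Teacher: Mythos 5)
Your proof is correct: multiplying the canonical representation by $a_{n-1}$, noting that the terms with index $k\leq n-1$ are integers, recognizing the remaining tail as $\sum_{k\geq n}c_k/(b_n\cdots b_k)$, and using the telescoping identity together with the canonical-representation requirement that $c_k<b_k-1$ for infinitely many $k$ to place that tail strictly inside $[0,1)$ is exactly what is needed, and the splitting of the tail at index $n+t$ then yields (\ref{eqlemain1}) (hence (\ref{eqlemain2})) without any gaps. Note that the paper itself gives no proof of this statement --- it is quoted verbatim from \cite[Lemma 3.1]{DR} --- and your argument is essentially the standard proof of that cited result, so it matches the intended reasoning rather than diverging from it.
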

\begin{fact}\label{eqInorm}
The following facts are well known for integer norm:
\begin{itemize}
\item for any $A\subseteq\N$ and $y_n=x_n+z_n$ if $\lim\limits_{n\in A} z_n=0$ then $\lim\limits_{n\in A}\|y_n\|=\lim\limits_{n\in A}\|x_n\|$.
\item for any integer $n$ and for any real number $x$, $\|n+x\|=\|x\|$.
\end{itemize}
\end{fact}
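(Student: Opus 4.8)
The plan is to treat the two bullet points separately, since each is a self-contained elementary property of the distance-to-the-nearest-integer function $\|\cdot\|$ on $\T$, and neither requires anything beyond its definition $\|x\|=\min\{\{x\},1-\{x\}\}$ together with basic properties of the fractional part. I would present the period-$1$ invariance (second bullet) first, as it is purely formal, and then the continuity-type statement (first bullet), which carries whatever mathematical content there is.

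For the second assertion I would argue directly from the definition. Because the fractional part is invariant under translation by integers, that is $\{n+x\}=\{x\}$ whenever $n\in\Z$, we get $\|n+x\|=\min\{\{n+x\},1-\{n+x\}\}=\min\{\{x\},1-\{x\}\}=\|x\|$, which is exactly the claim. No further input is needed, and this is where the identification of $\T$ with $[0,1]$ (with $0$ and $1$ glued) makes the invariance transparent.

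For the first assertion the crux is the triangle inequality $\|a+b\|\le\|a\|+\|b\|$ for all reals $a,b$, equivalently the fact that $(x,y)\mapsto\|x-y\|$ is the quotient metric on $\T=\R/\Z$. I would prove this by selecting integers $k,m$ with $|a-k|=\|a\|$ and $|b-m|=\|b\|$ and noting $\|a+b\|\le|a+b-(k+m)|\le|a-k|+|b-m|=\|a\|+\|b\|$. Applying this both to $y_n=x_n+z_n$ and to $x_n=y_n+(-z_n)$ yields the reverse triangle inequality $|\,\|y_n\|-\|x_n\|\,|\le\|z_n\|$. I would then observe that $\|z_n\|\le|z_n|$ always holds, since $0\in\Z$ is a competitor in the minimum defining $\|z_n\|$, so the hypothesis $\lim_{n\in A}z_n=0$ forces $\lim_{n\in A}\|z_n\|=0$; combined with the reverse triangle inequality this gives $\lim_{n\in A}|\,\|y_n\|-\|x_n\|\,|=0$, whence along $A$ the existence of either limit implies existence of the other with equal value. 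The only point deserving a word of care is that the statement should be read as asserting that the two limits along $A$ coincide whenever one of them exists; there is no genuine obstacle here, the entire content being the triangle inequality for $\|\cdot\|$ and the elementary estimate $\|z\|\le|z|$.
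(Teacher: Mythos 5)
Your proof is correct: the triangle inequality $\|a+b\|\le\|a\|+\|b\|$ (via nearest integers), the bound $\|z\|\le|z|$, and the resulting reverse triangle inequality give exactly the first bullet, and integer-invariance of the fractional part gives the second. The paper states this Fact without proof, as ``well known,'' so there is nothing to compare against; your argument is precisely the standard justification the paper implicitly relies on, including the correct reading that the two limits along $A$ coincide whenever one of them exists.
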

Also note that for any $r\in\N$,
\begin{equation}\label{eqr}
\bullet \ \mbox{if } \ r\{a_nx\}<1\ \mbox{ then } \ \{ra_nx\}=r\{a_nx\}.
\end{equation}
\begin{equation}\label{eqrn}
\bullet \ \mbox{if } \ r\|a_nx\|<\frac{1}{2} \ \mbox{ then } \ \|ra_nx\|=r\|a_nx\|.
\end{equation}
\begin{remark}\label{uconr1}
Observe that $(a_n)$ is a subsequence of $(d_n)$. Therefore, we can write $a_k=d_{n_k}$. Now, from the construction of $(d_n)$, it is evident that
$n_{k+1}-n_{k}=b_{k+1}-1$ and $d_{n_k+r-1}=ra_k$, where $r\in[1,b_{k+1}-1]$.
\end{remark}
\begin{definition}\label{uconr2}
For $A\subseteq\mathbb{N}$, let us define the lifting function $L \ : \ \mathcal{P}(\N) \to \mathcal{P}(\N)$ as
   $$
L(A)=\bigcup\limits_{k\in A} [n_{k-1},n_{k}-1].
$$
Then, observe that $L$ is injective and for a sequence $(A_n)$ of sets in $\mathbb{N}$, we can write
\begin{itemize}
\item $L(\displaystyle\bigcup_{n=1}^{\infty}A_n)=\displaystyle\bigcup_{n=1}^{\infty}L(A_n)$ and $L(\displaystyle\bigcap_{n=1}^{\infty}A_n)=\displaystyle\bigcap_{n=1}^{\infty}L(A_n)$,
\item $L(A_m\setminus A_n)=L(A_m)\setminus L(A_n)$ for all $m,n\in\mathbb{N}$.
\end{itemize}
\end{definition}
\vspace{.1cm}

\section{Main results.\vspace{.3cm} \\ }

As mentioned before, our primary focus is to understand the nature of the cardinality of the subgroup $t^{s}_{(d_n)}{(\T)}$. As $(d_n)$ is a non-arithmetic sequence the hunch has been that it may not influence an uniform behavior for correspondingly generated $s$-characterized subgroups (unlike arithmetic sequences). Moreover as it is generated from an arithmetic sequence but arithmetic sequences themselves can be quite dense or quite non-dense, per say, it seems natural to expect what we can call "extreme variations". As we are looking into $s$-characterized subgroups where natural density is playing a major role, it is imperative to somehow involve this notion which will become clearer in the next two sections.

\vspace{.3cm}
\subsection{Sequences $(d_n)$ generating uncountable s-characterized subgroups. \vspace{.3cm} \\}
Before going further we provide some interesting definitions related to arithmetic sequences which will be crucial to formulate our main results.

\begin{definition}
An arithmetic sequence $(a_n)$ is called density lifting invariant (in short, dli) if for every infinite $A\subseteq\N$ with $d(A)=0$, ${d}(L(A))=0$.
\end{definition}
For $A\subseteq\mathbb{N}$ and $m\in\mathbb{N}\cup \{0\}$, we write
$$A-m=\{n\in\mathbb{N}:n=a-m~\mbox{ for some }~a\in A\}.$$

\begin{definition}
An arithmetic sequence $(a_n)$ is called weakly density lifting invariant (in short, weakly dli) if there exists an infinite $A\subseteq\N$ such that ${d}(L(A-m))=0$ for each $m\in\N\cup \{0\}$.
\end{definition}

\begin{proposition}\label{prodliwdli}
Any dli arithmetic sequence is weakly dli.
\end{proposition}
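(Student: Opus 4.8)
The plan is to exploit the fact that the dli hypothesis applies to \emph{every} infinite density-zero subset of $\N$, so it suffices to produce a single infinite set $A$ for which all the left-translates $A-m$ remain infinite and of density zero; the dli property will then hand us $d(L(A-m))=0$ for free, for every $m\in\N\cup\{0\}$.

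First I would fix any infinite set $A\subseteq\N$ with $d(A)=0$ — for concreteness one may take $A=\{2^k:k\in\N\}$ or $A=\{k^2:k\in\N\}$. The heart of the argument is the elementary observation that natural density is invariant under a finite left shift. For a fixed $m$, every element of $(A-m)\cap[0,n-1]$ arises (for $n>m$) from an element of $A$ lying in $[m+1,n+m-1]$, whence
\begin{equation*}
\frac{|(A-m)\cap[0,n-1]|}{n}\leq\frac{|A\cap[0,n+m-1]|}{n+m}\cdot\frac{n+m}{n}.
\end{equation*}
Letting $n\to\infty$ and using $\overline{d}(A)=0$ together with $\frac{n+m}{n}\to 1$ forces $\overline{d}(A-m)=0$, hence $d(A-m)=0$.

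Next I would check that each $A-m$ remains infinite: since $A$ is infinite and only finitely many of its elements can be $\leq m$, the set $A-m$ still contains infinitely many positive integers. Thus for every $m\in\N\cup\{0\}$ the translate $A-m$ is an infinite subset of $\N$ of density zero, the case $m=0$ reading simply $A-0=A$.

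Finally, applying the dli hypothesis to each of these infinite density-zero sets $A-m$ yields $d(L(A-m))=0$ for all $m\in\N\cup\{0\}$, which is exactly the weakly dli condition witnessed by $A$. The only point demanding any care is the shift-invariance estimate together with the bookkeeping that $A-m$ stays infinite, so that dli is genuinely applicable; there is no real analytic obstacle here, since the lifting function $L$ is never examined directly but is instead handled entirely through the dli property as a black box.
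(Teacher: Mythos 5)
Your proposal is correct and follows essentially the same route as the paper's proof: fix an infinite density-zero set $A$, use translation invariance of natural density to see that each $A-m$ is still an infinite density-zero set, and then apply the dli hypothesis to each translate. The only difference is cosmetic — you spell out the shift-invariance estimate and the infinitude of $A-m$, which the paper dispatches in one line by citing translation invariance of $d$.
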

\begin{proof}
Let $(a_n)$ be a dli arithmetic sequence. Fix any infinite $A\subseteq\N $ such that $d(A)=0$. Since $d$ is translation invariant, $d(A-m)=0$ for each $m\in \mathbb{N}\cup \{0\}$. As $(a_n)$ is dli, it follows that ${d}(L(A-m))=0$ for each $m\in\mathbb{N}\cup \{0\}$.  Consequently, $(a_n)$ is weakly dli.
\end{proof}
Our next lemma is instrumental in proof of Proposition \ref{prodliqbound} which provides a broader view of dli arithmetic sequences.
\begin{lemma}\label{ledliqboundnew}
For any $b$-bounded $A\subseteq\N$, if $d(A)=0$ then $d(L(A))=0$.
\end{lemma}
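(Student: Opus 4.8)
The plan is to estimate the upper density $\overline{d}(L(A))$ directly and show it vanishes, which suffices since density is non-negative. The starting point is Remark~\ref{uconr1}: the block attached to an index $k$ in the lifting function, namely $[n_{k-1}, n_k - 1]$, has exactly $n_k - n_{k-1} = b_k - 1$ elements. Two observations will then drive the estimate. First, since $(a_n)$ is arithmetic we have $b_k \geq 2$ for every $k$, so each block is nonempty and consequently $n_K \geq \sum_{k=1}^{K}(b_k - 1) \geq K$. Second, since $A$ is $b$-bounded there is a constant $M$ with $b_k \leq M$ for all $k \in A$, so every block selected by $A$ contributes at most $M-1$ elements to $L(A)$.

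With these in hand, I would fix a large $N$ and choose $K$ with $n_K \leq N < n_{K+1}$. Every element of $L(A)$ lying below $N$ comes from a block $[n_{k-1}, n_k - 1]$ with $k \in A$ and $k \leq K+1$, hence
$$
|L(A) \cap [0, N-1]| \leq \sum_{\substack{k \in A \\ k \leq K+1}} (b_k - 1) \leq (M-1)\,|A \cap [1, K+1]|.
$$
Dividing by $N$ and using $N \geq n_K \geq K$ gives
$$
\frac{|L(A) \cap [0, N-1]|}{N} \leq (M-1)\,\frac{|A \cap [1, K+1]|}{K}.
$$
As $N \to \infty$ we also have $K \to \infty$, and since $d(A)=0$ the right-hand side tends to $0$ (writing $\frac{|A\cap[1,K+1]|}{K} = \frac{|A\cap[1,K+1]|}{K+1}\cdot\frac{K+1}{K}$). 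Taking the limsup over $N$ yields $\overline{d}(L(A)) = 0$, whence $d(L(A)) = 0$.

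The only delicate point is the passage from the special values $N = n_K$, where the counting is exact, to arbitrary $N$ lying strictly inside a block; this is precisely what forces the inequality $N \geq n_K \geq K$ to keep the denominator large enough. I expect this transition, together with correctly accounting for the possibly partial block $[n_K, n_{K+1}-1]$ (which is why the sum runs up to $k \leq K+1$), to be the main bookkeeping obstacle, though it is routine once the block lengths $b_k - 1$ are identified. It is worth emphasising that the role of $b$-boundedness is exactly to stop a single selected block from being so long that it inflates the density of $L(A)$ even when $A$ is sparse; without this hypothesis a sparse $A$ meeting indices with enormous ratios $b_k$ could yield a lift of positive density, so the assumption cannot be removed.
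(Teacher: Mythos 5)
Your proof is correct, and it differs in execution from the paper's own argument even though it rests on the same two pillars: the blocks $[n_{k-1},n_k-1]$ selected by $A$ have length $b_k-1\leq M-1$ by $b$-boundedness, and the block endpoints grow at least linearly ($n_K\geq K$, since every $b_k\geq 2$ for an arithmetic sequence). The paper introduces the set of left endpoints $A'=\{n_{k-1}:k\in A\}$, shows $\bar{d}(A')\leq\bar{d}(A)=0$ using $n_k\geq k$, observes the covering $L(A)\subseteq\bigcup_{i=0}^{M}(A'+i)$, and concludes from translation invariance and finite subadditivity of natural density. You instead run a direct counting estimate: sandwich $n_K\leq N<n_{K+1}$, bound $|L(A)\cap[0,N-1]|\leq (M-1)\,|A\cap[1,K+1]|$, divide by $N\geq n_K\geq K$, and let $N\to\infty$. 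What your version buys is self-containment --- it needs no structural properties of density beyond its definition, and it effectively makes the quantitative dependence explicit (in substance, $\overline{d}(L(A))\leq (M-1)\,\overline{d}(A)$, though both sides vanish here). What the paper's version buys is brevity and a reusable geometric fact about the lifting function, namely that $L(A)$ sits inside finitely many translates of the endpoint set $A'$; this covering viewpoint is closer in spirit to how $L$ is manipulated elsewhere in the paper (e.g., in Proposition \ref{prodliqbound}). Your handling of the two delicate points --- absorbing the partial final block by letting the sum run to $k\leq K+1$, and justifying that $K\to\infty$ as $N\to\infty$ so that $d(A)=0$ can be invoked along the subsequence $K+1$ --- is sound, so there is no gap.
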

\begin{proof}
Let $(a_n)$ be an arithmetic sequence of integers and $A$ be a $b$-bounded subset of $\N$ such that $d(A)=0$. Then there exists $M\in\mathbb{N}$ such that $2\leq b_n\leq M$ for each $n\in A$. Now Remark \ref{uconr1} entails that for each $k\in A$
$$
a_k=d_{n_k}~\mbox{ and }~ n_{k}-n_{k-1}=b_{k}-1<M.
$$
Let us set $A'=\{n_{k-1}:k\in A\}$. It is clear that $n_k\geq k$ for each $k\in\mathbb{N}$. Therefore, observe that
\begin{align*}
	\bar{d}(A')&=\limsup_{n\to\infty}\frac{|\{j\in\mathbb{N}:j\leq n~\mbox{and}~j\in A'\}|}{n}\\
	 &\leq \limsup_{n\to\infty}\frac{|\{j\in\mathbb{N}:j\leq n~\mbox{and}~j\in A\}|}{n}\\
	&=\bar{d}(A)=0.
\end{align*}
It is easy to realize that $L(A)\subseteq\displaystyle\bigcup_{i=0}^{M}(A'+i)$. Since $d(A')=0$, we obtain $d(A'+i)=0$ for each $i\in\mathbb{N}$. This also ensures that $d(L(A))=0$.
\end{proof}
\begin{proposition}\label{prodliqbound}
Any $b$-bounded arithmetic sequence is dli but not conversely.
\end{proposition}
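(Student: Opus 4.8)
The statement splits into two implications, which I would treat separately.

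For the forward direction (every $b$-bounded arithmetic sequence is dli), the plan is to reduce immediately to Lemma \ref{ledliqboundnew}. If $(a_n)$ is $b$-bounded then the ratio sequence $(b_n)$ is bounded, so \emph{every} subset $A\subseteq\N$ is automatically $b$-bounded. Hence, given any infinite $A\subseteq\N$ with $d(A)=0$, the set $A$ is a $b$-bounded set of density zero, and Lemma \ref{ledliqboundnew} yields $d(L(A))=0$. As $A$ is arbitrary, $(a_n)$ is dli. This direction is essentially a one-line appeal to the lemma.

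For the converse I would exhibit a single dli sequence that is not $b$-bounded, the natural candidate being $(a_n)=(n!)$, for which $b_n\to\infty$ and which is therefore patently not $b$-bounded. The work is to verify that $(n!)$ is nonetheless dli. First I would record the shape of $L(A)$: by Remark \ref{uconr1} the block attached to $k\in A$ is $[n_{k-1},n_k-1]$ with $n_k-n_{k-1}=b_k-1$ elements, so $|L(A)\cap[0,n_k-1]|=\sum_{j\in A,\,j\le k}(b_j-1)$, while $n_k=\sum_{j=1}^k(b_j-1)+O(1)\sim k^2/2$. The crucial reduction is a monotonicity check on $f(N):=|L(A)\cap[0,N]|$: comparing $(f(N)+1)/(N+1)$ with $f(N)/N$ shows that $f(N)/N$ is non-decreasing as $N$ runs through a lifted block (where $f$ increases by $1$ and $f(N)\le N$) and is decreasing on the gaps, so its local maxima occur exactly at the right endpoints $N=n_k-1$, $k\in A$. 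Consequently
\[
\overline{d}(L(A))=\limsup_{k\in A}\frac{\sum_{j\in A,\,j\le k}(b_j-1)}{n_k-1}.
\]
For $(n!)$ one has $b_j-1\le j\le k$ for every $j\le k$, so $\sum_{j\in A,\,j\le k}(b_j-1)\le k\,|A\cap[0,k]|$; since $d(A)=0$ forces $|A\cap[0,k]|/k\to0$ and $k^2/(n_k-1)\to2$, the displayed $\limsup$ vanishes, giving $d(L(A))=0$. As $A$ was an arbitrary density-zero set, $(n!)$ is dli but not $b$-bounded.

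The main obstacle is the converse, and within it the monotonicity lemma pinning $\overline{d}(L(A))$ to the endpoints $n_k-1$; once that is in place the computation for $(n!)$ is short. The conceptual point I would stress is that unboundedness of $(b_n)$ does not by itself inflate $d(L(A))$: although a single $k\in A$ can contribute a block of length $b_k-1$, for $(n!)$ this length is at most $k$, so a density-zero $A$ contributes total mass $\le k\,|A\cap[0,k]|=o(k^2)$ against the quadratic normalisation $n_k\sim k^2/2$. I would take minor care with the exact indexing convention for $(n!)$ (whether $b_n=n$ or $b_n=n+1$), but the estimate $b_j-1\le j\le k$ and the asymptotics $n_k\sim k^2/2$ hold in either case, so the argument is robust.
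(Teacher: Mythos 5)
Your proof is correct, and your forward direction is exactly the paper's: both reduce to Lemma \ref{ledliqboundnew} by noting that when $(a_n)$ is $b$-bounded every subset of $\N$ is automatically $b$-bounded. For the converse, however, you take a genuinely different route. The paper builds a bespoke counterexample: it fixes a density-one set $K=\bigcup_j[g_j,h_j]$ with gaps $|g_{j+1}-h_j|\to\infty$, sets $b_k=2$ except at the sparse indices $s_j+1$ where $b_{s_j+1}=g_{j+1}-h_j+1\to\infty$, and then splits an arbitrary density-zero $A$ into a $b$-bounded part (handled by Lemma \ref{ledliqboundnew}) and a $b$-divergent part whose lift lands inside $(\N\setminus K)\cup\{h_j:j\in\N\}$, a null set by construction. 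You instead take $(n!)$ and argue directly: your monotonicity reduction correctly shows that $\overline{d}(L(A))$ is attained along the block right-endpoints $n_k-1$, and then $b_j-1\le k$ for $j\le k$ together with $n_k\sim k^2/2$ gives $\overline{d}(L(A))\le\limsup_k 2|A\cap[1,k]|/k=0$. Both arguments are sound; the comparison is instructive. The paper's construction shows that dli is compatible with arbitrarily large (but sparse) ratio jumps, and its bounded/divergent splitting mirrors the technique reused later in Lemma \ref{le1}. Your example is more natural and in one respect a stronger illustration, since $(n!)$ is fully $b$-divergent rather than divergent only along a sparse subsequence; it also yields a reusable quantitative criterion (uniform block lengths $o(n_k/|A\cap[1,k]|)$ force $d(L(A))=0$), and it strengthens the paper's own later example, which asserts only that $(n!)$ is weakly dli. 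The two points you flag yourself — the indexing convention for $(n!)$ and the eventual validity of $f(N)\le N$ in the monotonicity step — are genuinely harmless and would need only a sentence each in a full write-up.
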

 \begin{proof}
Assume that $(a_n)$ is a $b$-bounded arithmetic sequence. Pick any $A\subseteq\mathbb{N}$ such that $d(A)=0$. As $A$ is again $b$-bounded, Lemma \ref{ledliqboundnew} ensures that $d(L(A))=0$. Consequently,  $(a_n)$ is dli.\\

To illustrate that the converse is not necessarily true, we will construct an arithmetic sequence $(a_n)$ which will be dli but not $b$-bounded. Consider a \( K \subseteq \mathbb{N} \) such that
$$
K=\bigcup_{j=1}^{\infty}[g_j,h_j]
$$
where $g_1=1, g_j\leq h_j,|g_{j+1}-h_j|\to\infty$, and $d(K)=1$ (note that the existence of such a set can always be ensured, for example, by taking $h_{j}-g_j=j^3$ and $g_{j+1}-h_j=j$). Let us rewrite
$$
K=\{1=n_0<n_1<n_2<...<n_k<...\}.
$$
From the construction of $K$, one can easily obtain a sequence $(s_j)$ such that $n_{s_j}=h_j$ and $n_{{s_j}+1}=g_{j+1}$.
For each $k\in\mathbb{N}\cup\{0\}$, we set
$$
b_{k+1}=n_{k+1}-n_{k}+1.
$$
Then, observe that
\begin{equation*}
b_k =
\left\{
\begin{array}{lr}
g_{j+1}-h_{j}+1&\mbox{ if }~k=s_{j}+1,\\
2&\mbox{otherwise}.
\end{array}
\right.
\end{equation*}
Thus, the associated arithmetic sequence $(a_n)$ is given by
\begin{equation*}
    a_0=1~\mbox{ and }~a_{n+1}=b_{n+1}a_n.
\end{equation*}
Since $(b_{s_j+1})$ is divergent, it is evident that $(a_n)$ is not $b$-bounded.

Now, let us pick any infinite $A\subseteq\mathbb{N}$ with $d(A)=0$. If $A$ is $b$-bounded then from Lemma \ref{ledliqboundnew} it follows that $d(L(A))=0$. So, let us assume that $A$ is not $b$-bounded. From the construction of $(b_n)$, one can observe that there actually exist non-empty sets $B,I\subseteq\mathbb{N}$ with $B$ being $b$-bounded and $I$ being $b$-divergent such that $A=B\cup I$. Since $B\subseteq A$ and $d(A)=0$ we already have $d(B)=0$. Therefore, from Lemma \ref{ledliqboundnew}, it is evident that $d(L(B))=0$. Note that  $I\subseteq\{s_j+1:j\in\mathbb{N}\}$. Also, we have
$$
L(\{s_j+1:j\in\mathbb{N}\}) = \displaystyle\bigcup_{j=1}^{\infty}[h_j,g_{j+1}-1] \subseteq (\N\setminus K)\cup \{h_j:j\in\mathbb{N}\},
$$
and consequently,
\begin{eqnarray*}
d(L(I)) \leq d(L(\{s_j+1:j\in\mathbb{N}\})) &\leq& d((\mathbb{N}\setminus K) \cup \{h_j:j\in\mathbb{N}\}) \\ &\leq& d(\mathbb{N}\setminus K) +d(\{h_j:j\in\mathbb{N}\}) \\ &=& d(\{h_j:j\in\mathbb{N}\})\ \mbox{ (since $d(K)=1)$} \\ &=& 0 \ \mbox{ (since $|h_{k+1}-h_{k}|\to\infty$)}.
\end{eqnarray*}
Finally, in view of Remark \ref{uconr2}, we can conclude that $d(L(A))=0$. Thus, $(a_n)$ is the required dli arithmetic sequence which is not $b$-bounded.
\end{proof}

\begin{proposition}\label{prowdliqbound}
Let $(a_n)$ be an arithmetic sequence such that $\lim\limits_{n\to\infty}\frac{ b_{n}}{\sum\limits_{i=1}^n (b_i-1)}=0$. Then $(a_n)$ is weakly dli.
\end{proposition}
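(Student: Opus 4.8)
The plan is to exhibit the witnessing set $A$ explicitly, by a fast-growth recursion, and then to show that for \emph{every} fixed $m$ the density of $L(A-m)$ is forced to vanish through a ratio-of-partial-sums estimate. Throughout set $S_k=\sum_{i=1}^{k}(b_i-1)$. Telescoping the relation $n_k-n_{k-1}=b_k-1$ from Remark~\ref{uconr1} gives $n_k=n_0+S_k$ with $n_0$ a constant, and since $b_i-1\ge 1$ we have $S_k\to\infty$. Hence I can choose an infinite set $A=\{k_1<k_2<\cdots\}$ recursively so that $S_{k_i}>i\,S_{k_{i-1}}$ for every $i$; in particular $S_{k_{i-1}}/S_{k_i}\to 0$ and the $k_i$ are very sparse. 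This $A$ will be the required witness.

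First I would reduce the computation of $\overline d(L(A-m))$ to the behaviour at the right endpoints of the lifted blocks. By Definition~\ref{uconr2}, $L(A-m)=\bigcup_{i:\,k_i>m}[\,n_{k_i-m-1},\,n_{k_i-m}-1\,]$, a block of length $b_{k_i-m}-1$. Writing $c_i=\sum_{l\le i}(b_{k_l-m}-1)$ for the cumulative length and $N_i=n_{k_i-m}-1$ for the right endpoint, an elementary monotonicity check (using the sparseness of $A$, which makes $c_{i-1}<n_{k_i-m-1}$) shows that $N\mapsto |L(A-m)\cap[0,N]|/N$ increases inside each block and decreases in the gaps, so its local maxima occur exactly at the $N_i$. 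Consequently
\[
\overline d(L(A-m))=\limsup_{i\to\infty}\frac{c_i}{n_{k_i-m}}.
\]
Bounding the numerator by $\sum_{l<i}(b_{k_l-m}-1)\le S_{k_{i-1}-m}\le S_{k_{i-1}}$ together with the last term $b_{k_i-m}-1$, and using $n_{k_i-m}\ge S_{k_i-m}$, it suffices to establish the two limits $S_{k_{i-1}}/S_{k_i-m}\to 0$ and $(b_{k_i-m}-1)/S_{k_i-m}\to 0$.

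The second limit is immediate from the hypothesis $b_n/S_n\to 0$, since $k_i-m\to\infty$. For the first, the decisive observation is that deleting the top $m$ summands barely changes $S$: for fixed $m$,
\[
\frac{S_{k_i}-S_{k_i-m}}{S_{k_i}}=\sum_{p=k_i-m+1}^{k_i}\frac{b_p-1}{S_{k_i}}\le m\sup_{p>k_i-m}\frac{b_p}{S_p}\to 0 ,
\]
where I used $S_p\le S_{k_i}$ and then $b_p/S_p\to 0$; thus $S_{k_i-m}\sim S_{k_i}$. Combined with the recursive choice $S_{k_{i-1}}/S_{k_i}\to 0$ this yields $S_{k_{i-1}}/S_{k_i-m}\to 0$. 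Both limits therefore hold, so $\overline d(L(A-m))=0$, and since lower density is dominated by upper density we get $d(L(A-m))=0$ for every $m\in\N\cup\{0\}$, which is precisely the definition of $(a_n)$ being weakly dli.

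The delicate point is the uniform control over the shift $m$. A priori $S_{k_i-m}$ could be far smaller than $S_{k_i}$ if one of the deleted ratios $b_p$ dominated the partial sum, and then $S_{k_{i-1}}/S_{k_i-m}$ need not vanish, so a single $A$ could not serve all $m$ at once. The Ces\`aro-type hypothesis $b_n/S_n\to 0$ is exactly the ``no single term dominates'' condition that forbids this and forces $S_{k_i-m}\sim S_{k_i}$; this is what lets one fast-growing set $A$ work simultaneously for every $m$. The remaining ingredients---the monotonicity reduction to endpoint ratios and the telescoping identity $n_k=n_0+S_k$---are routine.
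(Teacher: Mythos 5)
Your proof is correct, and it follows the same overall strategy as the paper's: both construct the witness $A$ by a fast-growth recursion tied to the partial sums $S_k=\sum_{i\le k}(b_i-1)$ (which, by Remark \ref{uconr1}, are exactly what measure the endpoints $n_k$ of the lifted blocks), and both then bound the upper density of $L(A-m)$ at block right-endpoints by splitting the count into ``old blocks'' plus ``current block'', killing the current block with the hypothesis $b_n/S_n\to 0$. The genuine difference is in how uniformity over the shift $m$ is achieved. The paper builds the shifts into the recursion itself: $u_{j+1}$ is chosen so that $n_{u_{j+1}}$ exceeds $j\sum_{i=1}^{j}\sum_{t=0}^{i}(b_{u_i+1-t}-1)$, a double sum that pre-compensates for every translate, and the density estimate then needs a three-term splitting in which the two most recent blocks are handled separately. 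You instead use a shift-free recursion ($S_{k_i}>i\,S_{k_{i-1}}$) and isolate as a separate lemma that the hypothesis alone forces $S_{k_i-m}\sim S_{k_i}$ for each fixed $m$ (deleting the top $m$ ratios cannot matter when no single ratio dominates the partial sum), which transfers the recursive smallness $S_{k_{i-1}}/S_{k_i}\to 0$ to the shifted denominators. Your version buys a simpler recursion and makes the role of the Ces\`aro-type hypothesis more transparent; the paper's version avoids your equivalence lemma at the cost of a more elaborate recursive condition and heavier bookkeeping. One point you assert rather than prove is the prerequisite $c_{i-1}<n_{k_i-m-1}$ for the monotonicity reduction, but it follows from the very estimates you establish ($S_{k_{i-1}}=o(S_{k_i-m})$ together with $S_{k_i-m-1}\sim S_{k_i-m}$), so the gap is cosmetic.
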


\begin{proof}
Assume that $(a_n)$ is an arithmetic sequence for which \begin{equation}\label{wdl condition}
 \lim\limits_{n\to\infty}\frac{ b_{n}}{\sum\limits_{i=1}^n (b_i-1)}=0.
 \end{equation}
 Since $(a_n)$ is a subsequence of $(d_n)$, there exists $\{1=n_0<n_1<...<n_k<...\}$ such that $a_k=d_{n_k}$. In view of Remark \ref{uconr1}, we have
 $$
 n_{k}-n_{k-1}=b_{k}-1~\mbox{ for each }~k\in\mathbb{N}.
 $$
 Let us define the sequence $(u_n)$ recursively as follows:
	$$u_1=1~\mbox{ and }~u_{j+1}=\min\{r\in\mathbb{N}:~ r>u_{j}+j+1~\mbox{ with }~ n_{r}>j\sum_{i=1}^{j}\sum_{t=0}^{i} (b_{u_i+1-t}-1)\}.$$
 From the construction of $(u_n)$, it follows that
 \begin{equation} \label{Eq 12}
 n_{u_{j-1}}>(j-2)\displaystyle\sum_{i=1}^{j-2}\sum_{t=0}^{i} (b_{u_i+1-t}-1)~\mbox{ for each }~j\in\mathbb{N}\setminus\{1\}.
  \end{equation}
Set $A=\{u_j+1:j\in\mathbb{N}\}$. We claim that $d(L(A-m))=0$ for each $m\in\mathbb{N}\cup\{0\}$. Choose $m\in\mathbb{N}\cup\{0\}$. Then it is evident that $u_j>u_{j-1}+m+1~$ whenever $~j> m$. Let us set
 $$
 B_m=\displaystyle\bigcup_{j=m+1}^{\infty}[n_{u_j-m},n_{{u_j-m}+1}-1].
 $$
First, observe that $L(A-m)\subset^* B_m$. Next note that
 \allowdisplaybreaks
 \begin{eqnarray*}
 \bar{d}(B_m)&=&\limsup_{n\to\infty}\frac{|B_m\cap[1,n]|}{n}\\
 &=&\limsup_{j\to\infty}\frac{|B_m\cap[1,n_{{u_j-m}+1}-1]|}{n_{{u_j-m}+1}-1}\\
  &\leq&\limsup_{j\to\infty}\frac{|B_m\cap[1,n_{u_{j-2}-m+1}-1]|}{n_{{u_j}-m+1}-1} \\ & &+ \limsup_{j\to\infty}\frac{|B_m\cap[n_{u_{j-1}-m}, n_{{u_{j-1}}-m+1}-1]|}{n_{{u_j}-m+1}-1}\\
  & &+ \limsup_{j\to\infty}\frac{|B_m\cap[n_{u_j-m}, n_{{u_j}-m+1}-1]|}{n_{{u_j}-m+1}-1}\\
 &\leq&\limsup_{j\to\infty}\frac{\displaystyle\sum_{i=m}^{j-2} (b_{u_i-m+1}-1)}{n_{{u_{j-1}}}}+\limsup_{j\to\infty}\frac{(b_{u_{j-1}-m+1}-1)}{\{n_1-1+\displaystyle\sum_{i=2}^{u_j-m+1}(b_i-1)\}}
 \\
  & &+\limsup_{j\to\infty}\frac{(b_{u_{j}-m+1}-1)}{\{n_1-1+\displaystyle\sum_{i=2}^{u_j-m+1}(b_i-1)\}}\\
  &\leq& \limsup_{j\to\infty} \frac{\displaystyle\sum_{i=1}^{j-2}\sum_{t=0}^{i} (b_{u_i+1-t}-1)}{n_{u_{j-1}}}+\limsup_{j\to\infty}\frac{b_{u_{j-1}-m+1}}{\displaystyle\sum_{i=1}^{u_{j-1}-m+1}(b_i-1)}\\ & & +\limsup_{j\to\infty}\frac{b_{u_{j}-m+1}}{\displaystyle\sum_{i=1}^{u_j-m+1}(b_i-1)}\\
&\leq& \lim_{j\to\infty}\frac{1}{j-2}+0~\mbox{ (in view of Eq (\ref{wdl condition}) and Eq (\ref{Eq 12}))}\ =\ 0.
 \end{eqnarray*}
 Consequently, $d(B_m)=0$ and this ensures that $d(L(A-m))=0$ since $L(A-m)\subset^* B_m$. As \( m \in \mathbb{N}\cup\{0\} \) was chosen arbitrarily, we deduce that $(a_n)$ is weakly dli.
\end{proof}
\let\thefootnote\relax\footnotetext{For any two subsets $A$ and $B$ of $\mathbb{N}$ we will denote $A\subset^*B$ if $A\setminus B$ is finite and $A=^*B$ if $A\Delta B$ is finite.}

\begin{corollary}
Let $(a_n)$ be an arithmetic sequence such that for each $m\in\N$, $\lim\limits_{n\to\infty}\frac{\sum\limits_{i=0}^{m-1} (b_{n-i}-1)}{\sum\limits_{i=1}^n (b_i-1)}=0$. Then $(a_n)$ is weakly dli.
\end{corollary}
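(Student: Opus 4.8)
The plan is to deduce this corollary directly from Proposition \ref{prowdliqbound}, which already yields the weakly dli conclusion under the single-term hypothesis $\lim\limits_{n\to\infty}\frac{b_n}{\sum_{i=1}^n(b_i-1)}=0$. The governing idea is that the hypothesis stated here, being assumed for \emph{every} $m\in\N$, is in particular assumed for $m=1$; and for $m=1$ the numerator $\sum_{i=0}^{m-1}(b_{n-i}-1)$ degenerates to the single term $b_n-1$. Thus the full strength of the assumption is not actually needed—only its $m=1$ instance is used.

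First I would specialize the hypothesis to $m=1$, obtaining
$$\lim_{n\to\infty}\frac{b_n-1}{\sum_{i=1}^n(b_i-1)}=0.$$
Next I would reconcile the numerator $b_n-1$ appearing here with the numerator $b_n$ demanded by Proposition \ref{prowdliqbound}. Since $(a_n)$ is an arithmetic sequence, every ratio satisfies $b_n\ge 2$, whence $b_n-1\ge 1$ and therefore $b_n\le 2(b_n-1)$. Dividing by the partial sum $\sum_{i=1}^n(b_i-1)$ and passing to the limit gives
$$\frac{b_n}{\sum_{i=1}^n(b_i-1)}\le \frac{2(b_n-1)}{\sum_{i=1}^n(b_i-1)}\longrightarrow 0,$$
so the hypothesis of Proposition \ref{prowdliqbound} is verified.

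Finally I would invoke Proposition \ref{prowdliqbound} to conclude that $(a_n)$ is weakly dli. The only point requiring any care is the passage from $b_n-1$ to $b_n$ in the numerator, and this is entirely routine given the standing bound $b_n\ge 2$; no genuine obstacle arises. It is worth recording that this corollary serves a conceptual purpose: it shows that the considerably stronger hypothesis under which $|t^s_{(d_n)}(\T)|=\mathfrak c$ was obtained in Theorem \ref{sconth} already forces the weakly dli property isolated in the present work, thereby situating the earlier cardinality result squarely within the new framework.
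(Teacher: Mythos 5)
Your proposal is correct and takes essentially the same route as the paper: both deduce the hypothesis $\lim_{n\to\infty} b_n/\sum_{i=1}^n(b_i-1)=0$ of Proposition \ref{prowdliqbound} from the stated condition and then invoke that proposition. The paper merely asserts the implication is clear (and, incidentally, cites the proposition by a mislabelled reference), whereas you fill in the detail via the $m=1$ instance together with the bound $b_n\le 2(b_n-1)$ coming from $b_n\ge 2$.
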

\begin{proof}
Note that the given condition clearly implies
$$
\lim\limits_{n\to\infty}\frac{ b_{n}}{\sum\limits_{i=1}^n (b_i-1)}=0,
$$
and hence the proof is a direct consequence of Proposition \ref{prodliqbound}.
\end{proof}

Our next result provides a more general view of \cite[Theorem 2.14]{DG9}.
\begin{theorem}\label{sconthmain1}
If $(a_n)$ is weakly dli then $|t^{s}_{(d_n)}{(\mathbb{T})}|=\mathfrak{c}$.
\end{theorem}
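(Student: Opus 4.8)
The plan is to directly produce a family of $\mathfrak{c}$ many points of $t^s_{(d_n)}(\mathbb{T})$, using the infinite set $A\subseteq\mathbb{N}$ furnished by the weakly dli hypothesis, which satisfies $d(L(A-m))=0$ for every $m\in\mathbb{N}\cup\{0\}$. Split $A$ into two infinite pieces $A=A_1\sqcup A_2$. For each $\xi=(\xi_n)_{n\in A_1}\in\{0,1\}^{A_1}$ define $x_\xi\in\mathbb{T}$ through its canonical representation (Fact \ref{lemmanew}) by setting $c_n=\xi_n$ for $n\in A_1$ and $c_n=0$ otherwise; since $c_n=0<b_n-1$ for all $n\in A_2$, this is a legitimate canonical representation, and by uniqueness distinct $\xi$ yield distinct $x_\xi$, giving $\mathfrak{c}$ many points with $\supp(x_\xi)\subseteq A_1\subseteq A$. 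As $|t^s_{(d_n)}(\mathbb{T})|\le\mathfrak{c}$ trivially, it suffices to show every $x_\xi$ lies in $t^s_{(d_n)}(\mathbb{T})$.

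The heart of the argument is a block estimate. Recall from Remark \ref{uconr1} that the indices of $(d_n)$ split into consecutive blocks $B_k:=[n_k,n_{k+1}-1]=L(\{k+1\})$ on which $d_m$ runs through the multiples $ra_k$, $1\le r\le b_{k+1}-1$. Fix $\epsilon>0$ and choose $W$ with $2^{-W}<\epsilon$. I claim that if $c_{k+1}=c_{k+2}=\dots=c_{k+1+W}=0$ then $\|d_m x_\xi\|<\epsilon$ for every $m\in B_k$. Indeed, Lemma \ref{uconlmain} (with $t=0$) gives $\{a_kx_\xi\}=\frac{c_{k+1}+\{a_{k+1}x_\xi\}}{b_{k+1}}=\frac{\{a_{k+1}x_\xi\}}{b_{k+1}}$, while the vanishing coefficients together with Eq (\ref{eqsum}) force $\{a_{k+1}x_\xi\}\le b_{k+2}^{-1}\cdots b_{k+1+W}^{-1}\le 2^{-W}<\epsilon$. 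Hence for each $r\le b_{k+1}-1$ we have $r\{a_kx_\xi\}=\frac{r\{a_{k+1}x_\xi\}}{b_{k+1}}<\{a_{k+1}x_\xi\}<1$, so by Eq (\ref{eqr}) $\|ra_kx_\xi\|\le\{ra_kx_\xi\}=r\{a_kx_\xi\}<\epsilon$, as claimed.

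Consequently a block $B_k$ can contain an index $m$ with $\|d_m x_\xi\|\ge\epsilon$ only if some $c_{k+1+j}\neq0$ with $0\le j\le W$, that is only if $k+1\in\bigcup_{j=0}^{W}(\supp(x_\xi)-j)$. Using that $L$ commutes with unions (Definition \ref{uconr2}) and is monotone, together with $\supp(x_\xi)\subseteq A$, I obtain
$$\{m\in\mathbb{N}:\|d_m x_\xi\|\ge\epsilon\}\subseteq\bigcup_{j=0}^{W}L\big(\supp(x_\xi)-j\big)\subseteq\bigcup_{j=0}^{W}L(A-j).$$
The right-hand side is a finite union of density-zero sets by the weakly dli property, hence has density $0$. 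Since $\epsilon>0$ was arbitrary, $d_n x_\xi\to0$ statistically, i.e.\ $x_\xi\in t^s_{(d_n)}(\mathbb{T})$, which completes the construction of $\mathfrak{c}$ many elements and proves $|t^s_{(d_n)}(\mathbb{T})|=\mathfrak{c}$. The step I expect to be the crux is the block estimate of the second paragraph — turning a window of $W+1$ consecutive zero coefficients into a uniform bound over all $b_{k+1}-1$ multiples filling the block — since this is what converts the combinatorial density-zero information carried by $L(A-j)$ into genuine statistical convergence; the bookkeeping that aligns the ``bad blocks'' with the translates $\supp(x_\xi)-j$ is the other point requiring care.
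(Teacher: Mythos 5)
Your proof is correct and takes essentially the same approach as the paper: both arguments show that every $x$ with $\supp(x)\subseteq A$ lies in $t^{s}_{(d_n)}(\mathbb{T})$ by turning a window of consecutive zero coefficients into a uniform bound on $\{ra_kx\}$ over a whole block, and by covering the bad indices with the density-zero set $\bigcup_{j=0}^{W}L(A-j)$. The only cosmetic difference is in producing $\mathfrak{c}$ many such points: the paper injects $\{0,1\}^{\mathbb{N}}$ via binary choices of supports inside $A$, while you use binary coefficients on a fixed infinite half of $A$.
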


\begin{proof}
Assume that  $(a_n)$ is weakly dli. Then there exists an infinite $A\subseteq \mathbb{N}$ such that $d(L(A-m))=0$ for each $m\in\mathbb{N}$. Let us write $A=\{u_1+1<u_2+1<...<u_k+1<...\}$.
Consider $x\in\mathbb{T}$ such that
$$
supp(x)\subseteq A.
$$
Let $0<\epsilon<\frac{1}{2}$ be given. Choose $m\in\mathbb{N}$ such that $\frac{1}{2^{m-2}}<\epsilon$. Set
$$
B=[1,n_{u_m-m+1}-1]\cup\bigcup_{j=m}^{\infty} [n_{u_j-m+1},n_{u_j+1}-1].
$$
Since $d(\displaystyle\bigcup_{i=0}^{m-1} L(A-i))=0$ and $B=^* \displaystyle \bigcup_{i=0}^{m-1} L(A-i)$, we can conclude that $d(B)=0$.
Note that, for sufficiently large $i\in \mathbb{N}\setminus B$, we have $i=n_{k}+r-1$ with $r\in[1,b_{k+1}-1]$ where $k\notin \displaystyle\bigcup_{j=m}^{\infty}[u_j-m+1,u_{j}+1]$, i.e., $k,k+1,...,k+m-1\notin supp(x)$.
Subsequently, we obtain
\begin{eqnarray*}
    r\{a_kx\}&\leq &ra_k\sum_{i=k+m}^{\infty}\frac{c_i}{a_i}\\
              &\leq & \frac{ra_k}{a_{k+m-1}} \leq \frac{r}{b_{k+1}}\frac{a_{k+1}}{a_{k+m-1}}\leq \frac{1}{2^{m-2}}<\epsilon.
\end{eqnarray*}
Note that in view of Eq (\ref{eqr}), we can write $\{ra_kx\}=r\{a_kx\}$. Then, for sufficiently large $i\in \mathbb{N}\setminus B$, we have
\begin{align*}
    \{d_ix\}=\{d_{n_k+r-1}x\}=\{ra_kx\}=r\{a_kx\}<\epsilon.
\end{align*}
This entails that $x\in t^{s}_{(d_n)}{(\mathbb{T})}$.\\

We now assert that indeed $t_{(d_n)}^{s}(\mathbb{T})$ contains $\mathfrak{c}$ many elements. To demonstrate this,  we fix a sequence $\zeta=(s_i)\in\{0,1\}^{\mathbb{N}}$. Now consider $B^{\zeta}=\displaystyle\bigcup_{k=1}^{\infty}\{u_{2k+s_k}+1\}$, i.e., the subset $B^{\zeta}$ of $(u_j+1)$ is obtained by considering at each stage $k$ either $u_{2k}+1$ or $u_{2k+1}+1$ depending on the choice imposed by $\zeta$. Note that for any two distinct  $\zeta, \eta\in \{0,1\}^{\mathbb{N}}$ we always have $B^{\zeta}\neq B^{\eta}$ and this produces an injection map defined as follows:
$$\{0,1\}^{\mathbb{N}}\ni\zeta\rightarrow B^{\zeta}.$$
For each $\zeta\in  \{0,1\}^{\mathbb{N}}$, we pick $x^{\zeta}\in\mathbb{T}$ such that $supp(x)=B^{\zeta}$. Evidently, $x^{\zeta}\in t_{(d_n)}^{s}(\mathbb{T})$.
Finally as $|\{0,1\}^{\mathbb{N}}|=\mathfrak{c}$, we can conclude that $|t_{(d_n)}^{s}(\mathbb{T})|=\mathfrak{c}$.
\end{proof}

\begin{corollary}\label{sconcoromain1}
If $(a_n)$ is weakly dli then $t^{s}_{(d_n)}{(\mathbb{T})}\neq t_{(d_n)}(\mathbb{T})$.
\end{corollary}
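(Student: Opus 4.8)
The plan is to prove the strict inequality by producing a single explicit witness, i.e.\ an element lying in $t^s_{(d_n)}(\mathbb{T})$ but not in $t_{(d_n)}(\mathbb{T})$. By Theorem \ref{theoremA} (equivalently, because statistical convergence is weaker than ordinary convergence) we always have $t_{(d_n)}(\mathbb{T})\subseteq t^s_{(d_n)}(\mathbb{T})$, so it suffices to exhibit one $x\in t^s_{(d_n)}(\mathbb{T})\setminus t_{(d_n)}(\mathbb{T})$. The weakly dli hypothesis hands us an infinite witness set $A$ with $d(L(A-m))=0$ for every $m$, and I would exploit the freedom hidden in the construction of Theorem \ref{sconthmain1}: there the coefficients of the chosen $x$ were never pinned down, only its support, and that slack is exactly what lets me force failure of ordinary convergence.

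First I would sparsify the witness set. Since $L$ is monotone and $\overline{d}$ is monotone, any infinite $A'\subseteq A$ still satisfies $L(A'-m)\subseteq L(A-m)$ and hence $d(L(A'-m))=0$; thus I may pass to an infinite subset and assume the gaps between consecutive elements of $A$ tend to infinity. Writing $A=\{u_1+1<u_2+1<\cdots\}$, I would define $x\in\mathbb{T}$ through its canonical coefficients (Fact \ref{lemmanew}) by $c_{u_j+1}=\lfloor b_{u_j+1}/2\rfloor$ for every $j$ and $c_n=0$ otherwise. These coefficients are admissible: $1\le c_{u_j+1}\le b_{u_j+1}-1$ since $b_{u_j+1}\ge 2$, and $c_n=0<b_n-1$ holds for the infinitely many $n\notin A$. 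In particular $\supp(x)=A$, so the argument in the proof of Theorem \ref{sconthmain1}, which applies to every element supported inside the witness set, yields $x\in t^s_{(d_n)}(\mathbb{T})$.

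To rule out ordinary membership I would test the subsequence $d_{n_{u_j}}=a_{u_j}$ of $(d_n)$ (Remark \ref{uconr1}). Expanding $a_{u_j}x$ and discarding integer contributions from indices $n\le u_j$, one gets
$$
\{a_{u_j}x\}=\frac{c_{u_j+1}}{b_{u_j+1}}+a_{u_j}\!\!\sum_{n\ \geq\ \text{next support index}}\!\!\frac{c_n}{a_n},
$$
where the tail is at most $a_{u_j}/a_{u_{j+1}}\to 0$ by Eq (\ref{eqsum}) together with the growing gaps. Because $\tfrac{\lfloor b/2\rfloor}{b}\in[\tfrac13,\tfrac12]$ for every integer $b\ge 2$, this forces $\{a_{u_j}x\}\to$ the interval $[\tfrac13,\tfrac12]$ up to $o(1)$, hence $\|a_{u_j}x\|\ge \tfrac13-o(1)$. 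Thus $\|d_{n_{u_j}}x\|\not\to 0$, so $d_nx\not\to 0$ and $x\notin t_{(d_n)}(\mathbb{T})$, giving the desired element and completing the proof.

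The only delicate point—and the main obstacle, modest as it is—is controlling the tail of the canonical series so that $\{a_{u_j}x\}$ genuinely clusters near $c_{u_j+1}/b_{u_j+1}$ rather than drifting under the influence of later support terms. This is precisely what the sparsification (gaps $\to\infty$) secures, and it is also what keeps $\|a_{u_j}x\|$ uniformly bounded away from $0$ regardless of whether the ratios $b_{u_j+1}$ stay bounded or diverge. With the tail neutralized, the choice $c_{u_j+1}=\lfloor b_{u_j+1}/2\rfloor$ does the rest, and we conclude $t^s_{(d_n)}(\mathbb{T})\neq t_{(d_n)}(\mathbb{T})$.
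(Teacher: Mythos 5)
Your proof is correct, but it takes a genuinely different route from the paper. The paper's own proof is a two-line cardinality argument: by Theorem \ref{sconthmain1} the weakly dli hypothesis gives $|t^{s}_{(d_n)}(\mathbb{T})|=\mathfrak{c}$, while $t_{(d_n)}(\mathbb{T})$ is countable by a cited result of Das--Ghosh (\cite[Corollary 2.4, (iii)]{DG8}), so the two subgroups cannot coincide. You instead produce an explicit witness $x\in t^{s}_{(d_n)}(\mathbb{T})\setminus t_{(d_n)}(\mathbb{T})$: you sparsify the weakly dli witness set $A$ (legitimate, since $L$ and $\bar{d}$ are monotone, so density-zero lifts pass to subsets), choose coefficients $c_{u_j+1}=\lfloor b_{u_j+1}/2\rfloor$ on the support, reuse the statistical-membership half of the proof of Theorem \ref{sconthmain1} (which indeed only uses $\mathrm{supp}(x)\subseteq A$, never the coefficient values), and then check along the subsequence $d_{n_{u_j}}=a_{u_j}$ that $\{a_{u_j}x\}$ stays in $[\tfrac13-o(1),\tfrac12+o(1)]$, killing ordinary convergence. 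The key delicate point --- that the tail $a_{u_j}/a_{u_{j+1}}\le 2^{-(u_{j+1}-u_j)}$ must vanish, which is exactly what the sparsification buys --- is correctly identified and handled, and your bound $\lfloor b/2\rfloor/b\in[\tfrac13,\tfrac12]$ works uniformly whether the ratios $b_n$ are bounded or divergent. What the two approaches buy: the paper's argument is shorter but leans on the external countability result from \cite{DG8} and on the full strength of Theorem \ref{sconthmain1} (the injection of $\{0,1\}^{\mathbb{N}}$); yours is self-contained modulo the membership computation, avoids \cite{DG8} entirely, and is constructive, so it would still apply in situations where one does not know that $t_{(d_n)}(\mathbb{T})$ is countable --- at the cost of reopening the internals of Theorem \ref{sconthmain1}'s proof rather than quoting its statement.
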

\begin{proof}
  Since $t_{(d_n)}(\mathbb{T})$ is countable \cite[Corollary 2.4, (iii)]{DG8}, the proof follows from Theorem \ref{sconthmain1}.
\end{proof}
\begin{example}
Consider the arithmetic sequence $(n!)$. Observe that the sequence $(n!)$ satisfies the condition specified in Proposition \ref{prodliqbound} and hene $(n!)$ is weakly dli. Consequently, from Theorem \ref{sconthmain1} it follows that $|t^{s}_{(d_n)}{(\mathbb{T})}|=\mathfrak{c}$.
\end{example}

\vspace{.3cm}
\subsection{Sequences $(d_n)$ generating countable s-characterized subgroups. \vspace{.3cm} \\}

\begin{definition}
An arithmetic sequence $(a_n)$ is said to be strongly non density lifting invariant (in short, strongly non dli) if for any infinite $A\subseteq\N$, $\bar{d}(L(A))>0$.
\end{definition}
Note that if $(a_n)$ is strongly non dli then it is not weakly dli. Then Proposition \ref{prodliwdli} ensures that $(a_n)$ is also not dli. In this regard, our next proposition provides a broader view of strongly non dli arithmetic sequences.
\begin{proposition}\label{prosndli}
Consider the arithmetic sequence $(a_n)$ such that $b_{n+1}\geq \alpha(b_1+b_2+\ldots+b_n)$ for some $\alpha>0$. Then $(a_n)$ is strongly non dli.
\end{proposition}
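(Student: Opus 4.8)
The plan is to prove the statement directly by showing that for \emph{every} infinite $A\subseteq\N$ the upper density $\bar d(L(A))$ is bounded below by a positive constant depending only on $\alpha$. The driving idea is that the hypothesis $b_{n+1}\geq\alpha(b_1+\cdots+b_n)$ forces each ratio $b_k$ to dominate the sum of all preceding ratios. Consequently, a single block $[n_{k-1},n_k-1]$ sitting at the top of the interval $[0,n_k-1]$ already accounts for a fixed positive fraction of that interval. Since $\bar d$ is a limit superior, it will suffice to exhibit one such well-placed block for each element of $A$, rather than to control all of $L(A)$ at once.

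First I would fix an arbitrary infinite set $A=\{k_1<k_2<\cdots\}\subseteq\N$ and recall from Remark \ref{uconr1} that $n_k-n_{k-1}=b_k-1$, so that (with $n_0=1$) one has $n_k=1+\sum_{i=1}^{k}(b_i-1)$. Thus for each $j$ the block $[n_{k_j-1},n_{k_j}-1]\subseteq L(A)$ contains exactly $b_{k_j}-1$ integers and lies inside $[0,n_{k_j}-1]$. The second step is to estimate the density quotient at the scale $N=n_{k_j}$. Writing $n_{k_j}=b_{k_j}+\sum_{i=1}^{k_j-1}(b_i-1)$ and applying the hypothesis with $n=k_j-1$ (valid once $k_j\geq 2$, hence for all large $j$) to get $\sum_{i=1}^{k_j-1}b_i\leq b_{k_j}/\alpha$, I would obtain $n_{k_j}\leq b_{k_j}\big(1+\tfrac1\alpha\big)$. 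Using $b_{k_j}\geq 2$, this yields
$$
\frac{|L(A)\cap[0,n_{k_j}-1]|}{n_{k_j}}\;\geq\;\frac{b_{k_j}-1}{b_{k_j}\big(1+\tfrac1\alpha\big)}\;\geq\;\frac{\alpha}{2(1+\alpha)}.
$$
Letting $j\to\infty$, so that $n_{k_j}\to\infty$, gives $\bar d(L(A))\geq\frac{\alpha}{2(1+\alpha)}>0$; since $A$ was arbitrary, $(a_n)$ is strongly non dli.

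The computation itself is elementary, so the main work is conceptual rather than technical. The point to get right is the recognition that because $\bar d$ is a $\limsup$, a single dominant block per chosen scale suffices — there is no need to track the cumulative contribution of all blocks of $L(A)$, which would otherwise complicate the estimate. The only genuine care needed is the bookkeeping with the index shifts and the $-1$ offsets in $n_k$ and $b_k$, and noting that the edge effect from small indices in $A$ (where the hypothesis is not yet applied) touches only finitely many $j$ and hence does not affect the upper density.
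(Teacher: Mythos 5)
Your proof is correct and follows essentially the same route as the paper's: for an arbitrary infinite $A$, you isolate the single block $[n_{k-1},n_k-1]$ at the top scale $n_k$ and use the hypothesis to bound the whole interval length by $b_k\bigl(1+\tfrac1\alpha\bigr)$, exactly as the paper does. The only cosmetic difference is that you absorb the $-1$ offsets via $b_k\geq 2$, yielding the constant $\tfrac{\alpha}{2(1+\alpha)}$ instead of the paper's $\tfrac{\alpha}{\alpha+1}$, which is immaterial.
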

 \begin{proof}
 Consider the arithmetic sequence $(a_n)$ such that the sequence of ratio $(b_n)$ satisfies the following condition,
 $$
 b_{n+1}\geq \alpha(b_1+b_2+\ldots+b_n) \mbox{ for some } \alpha>0.
 $$
 Now pick any infinite $A\subseteq\N$. Let us write $A=\{s_j+1:j\in\N\}$. Now observe that
 \begin{eqnarray*}
 \bar{d}(L(A)) &=& \limsup_{n\to\infty}\frac{|L(A)\cap[1,n]|}{n} \\
 &\geq& \limsup_{n\to\infty}\frac{|L(A)\cap[1,n_{s_j+1}-1]|}{n_{s_j+1}-1} \\
&\geq& \limsup_{j\to\infty}\frac{b_{s_j+1}}{\sum\limits_{n=1}^{s_j+1}b_{n}}\\
&\geq& \limsup_{j\to\infty} \frac{\alpha}{\alpha+1} = \frac{\alpha}{\alpha+1} > 0.
 \end{eqnarray*}
 Since $A$ was chosen arbitrarily, we can conclude that $(a_n)$ is strongly non dli.
\end{proof}
\begin{example}\label{exsndli1}
Consider the arithmetic sequence $(a_n)$ such that $b_n=2^n$ (i.e., $a_n :=b_1\cdot b_2 \ldots b_n=2^{\frac{n(n+1)}{2}}$). Then we have $b_{n+1}=(b_1+b_2+\ldots+b_n)$. Therefore, in view of Proposition \ref{prosndli}, we conclude that $(a_n)$ is strongly non dli.
\end{example}
Our next result provides a more general view of \cite[Lemma 1.17]{DG2} and will play a key role in establishing Lemma \ref{le2} and Lemma \ref{le3}.
\begin{lemma}\label{le1}
Suppose $(a_n)$ is an arithmetic sequence and $A\subseteq\mathbb{N}$ is non $b$-bounded set with $\bar{d}(L(A))>0$. If $d(L(A'))=0$ holds for each $b$-bounded subset $A'$ of $A$ then there exists a $b$-divergent set $B\subseteq A$ such that $d(L(A\setminus B))=0$.
\end{lemma}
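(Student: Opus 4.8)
The plan is to build $B$ by a diagonal threshold construction that puts into $B$ exactly those $k\in A$ whose ratio $b_k$ is large relative to where the block $L(\{k\})=[n_{k-1},n_k-1]$ sits inside $\mathbb{N}$. For each $M$ the truncation $A^{\le M}:=\{k\in A:b_k\le M\}$ is a $b$-bounded subset of $A$, so the hypothesis yields $d(L(A^{\le M}))=0$. First I would fix a strictly increasing sequence of thresholds $M_1<M_2<\cdots$ with $M_j\to\infty$, and, using $d(L(A^{\le M_j}))=0$, choose a strictly increasing sequence $N_1<N_2<\cdots$ such that $|L(A^{\le M_j})\cap[1,n]|\le n/j$ for all $n\ge N_j$.

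Next I would assign to each $k$ with $n_{k-1}\ge N_1$ the block index $j(k)$ determined by $N_{j(k)}\le n_{k-1}<N_{j(k)+1}$, and define
$$
B=\{\,k\in A : n_{k-1}\ge N_1 \text{ and } b_k>M_{j(k)}\,\}.
$$
The reason for tying the threshold to the position $n_{k-1}$ is the monotonicity $M_j\le M_J$ for $j\le J$: this forces every $k\in A\setminus B$ with $n_{k-1}<N_{J+1}$ to satisfy $b_k\le M_{j(k)}\le M_J$, i.e. $k\in A^{\le M_J}$.

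Then I would verify the two required properties. For the density, fix a large $n$ and let $J$ satisfy $N_J\le n<N_{J+1}$. Any point of $L(A\setminus B)\cap[1,n]$ lies in a block $[n_{k-1},n_k-1]$ with $k\in A\setminus B$ and $n_{k-1}\le n<N_{J+1}$; if $n_{k-1}\ge N_1$ then $j(k)\le J$ and $b_k\le M_{j(k)}\le M_J$, so $k\in A^{\le M_J}$, while the finitely many $k$ with $n_{k-1}<N_1$ contribute only a fixed finite set $F$. Hence $L(A\setminus B)\cap[1,n]\subseteq (L(A^{\le M_J})\cap[1,n])\cup F$ and $|L(A\setminus B)\cap[1,n]|\le n/J+|F|$. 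Since $J\to\infty$ as $n\to\infty$, dividing by $n$ gives $\bar d(L(A\setminus B))=0$, i.e. $d(L(A\setminus B))=0$. For $b$-divergence, for $k\in B$ we have $b_k>M_{j(k)}$ with $j(k)\to\infty$ as $k\to\infty$ (since $n_{k-1}\to\infty$ and the $N_j$ increase), so $(b_k)_{k\in B}\to\infty$; and $B$ must be infinite, for otherwise $L(A\setminus B)$ would agree with $L(A)$ off a finite set and so inherit $\bar d(L(A))>0$, contradicting the density computation.

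The main obstacle is the density estimate, specifically controlling the blocks that straddle a threshold $N_j$ or the endpoint $n$: a naive block-by-block grouping leaves boundary intervals whose lengths $b_k-1$ need not be small, and it would only produce a bound along a subsequence of $n$. The device that removes this difficulty is to tie the threshold to $n_{k-1}$ and exploit the monotonicity of $(M_j)$, so that the whole initial segment $L(A\setminus B)\cap[1,n]$ is swallowed by $L(A^{\le M_J})\cap[1,n]$ and the single uniform bound $n/J$ applies for every $n$, with no separate treatment of straddling intervals.
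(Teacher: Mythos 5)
Your proof is correct, and it takes a genuinely different route from the paper's. The paper partitions $A$ into the exact level sets $A_m=\{n\in A:\ b_n=m\}$, each $b$-bounded so that $d(L(A_m))=0$ by hypothesis, and then argues in two cases: if all but finitely many $A_m$ are finite, the tail union $B=\bigcup_{m>n_0}A_m$ works at once; otherwise it invokes the pseudo-union property of the ideal of density-zero sets (cited from \cite{S}) to get a single density-zero set $C$ with $L(A_m)\setminus C$ finite for every $m$. Since that $C$ need not be a union of blocks, the paper then needs an extra conversion step replacing $C$ by a set of the form $L(C'')$ before taking $B=A\setminus C''$, whose $b$-divergence follows because each $A_m\setminus C''$ is finite. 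You instead work with the cumulative truncations $A^{\le M_j}$ and carry out the diagonalization explicitly, tying the admissible ratio threshold to the position $n_{k-1}$ of each block; this defines $B$ directly as a subset of $A$, so no case split and no block-conversion step are needed, and the single uniform estimate $|L(A\setminus B)\cap[1,n]|\le n/J+|F|$ yields $d(L(A\setminus B))=0$ along the full sequence of $n$, not merely a subsequence. In effect you re-prove, in block form, exactly the instance of the pseudo-union lemma that is needed: this buys self-containedness and a quantitative bound, at the price of the bookkeeping with $(M_j)$, $(N_j)$ and $j(k)$, whereas the paper's argument is shorter modulo the cited lemma. One detail where your write-up is more careful than the paper: you explicitly verify that $B$ is infinite (hence genuinely $b$-divergent) by noting that otherwise $L(A\setminus B)=^*L(A)$ would force $\bar{d}(L(A\setminus B))=\bar{d}(L(A))>0$, contradicting your density computation; the paper's second case leaves this point implicit, where it follows by the same argument.
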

\begin{proof}
For each $m\in\mathbb{N}\setminus\{1\}$, we set $A_m=\{n\in A: b_n=m\}$. Since $d(L(A'))=0$ for each $b$-bounded set $A'\subseteq A$ it is clear that $d(L(A_m))=0$ for all $m\in\mathbb{N}\setminus\{1\}$. Now consider the following situations:
\begin{itemize}
\item First, assume that there exists $n_0\in\mathbb{N}$ such that $A_m$ is finite for each $m>n_0$. We set $B=\bigcup_{m=n_0+1}^{\infty} A_m$. Then it is evident that $B$ is $b$-divergent and $d(L(A)\setminus L(B))=d(\bigcup_{m=2}^{n_0} L(A_m))=0$. Consequently, $d(L(A\setminus B))=0$.\\
\item Now, without any loss of generality, let us assume that $A_m$ is infinite for each $m\in\mathbb{N}\setminus\{1\}$. Since $(L(A_m))$ is a sequence of density zero sets, there exists $C\subseteq \mathbb{N}$ such that $d(C)=0$ and $L(A_m)\setminus C$ is finite for each $m\in\mathbb{N}\setminus\{1\}$ (see \cite{S} for the explicit construction of this set). Therefore, for each $m\geq 2$, there exists $t_m\in\N$ such that
$$
L(A_m)\setminus C \subseteq \bigcup\limits_{i=1}^{t_m} [n_{k^{(m)}_i-1},n_{k^{(m)}_i}-1]
$$
and
$$
\big(L(A_m)\setminus C\big) \cap  [n_{k^{(m)}_i-1},n_{k^{(m)}_i}-1]\neq \emptyset \mbox{ for each }  i\in \{1,2,\ldots,t_m\}.
$$
In view of Remark \ref{uconr2} and the fact that $(A_m)$ forms a partition of $\mathbb{N}$, it is easy to see that $(L(A_m))$ also forms a partition of $\mathbb{N}$. We now set
$$A_m'=A_m\setminus\{k_i^{(m)}:1\leq i\leq t_m\} \mbox{  and  } C'=\bigcup_{m=2}^{\infty} L(A_m').
$$
Clearly $C'\subseteq C$. From Remark \ref{uconr2}, it follows that $C'=L(C'')$, where $C''=\bigcup_{m=2}^{\infty}A_m'$. Consequently, observe that $d(L(C''))=0$ and $L(A_m)\setminus L(C'')$ is finite for each $m\geq 2$. This ensures that $A_m\setminus C''$ is finite for each $m\geq 2$. Now, put $B=A\setminus C''$. Since $d(L(C''))=0$, we have $d(L(A\setminus B))=d(L(A)\setminus L(B))=0$. Let $l\in\mathbb{N}\setminus \{1\}$ be arbitrary. We set
$$n_{l}=\max\{n\in\mathbb{N}:n\in A_m\setminus C''~\mbox{and}~m\leq l\},$$
(note that such an element $n_l$ exists since each $A_m\setminus C''$ is finite).
Now for each $n\in B$ with $n>n_{l}$, we then have $b_n>l$. Since $l$ was arbitrarily chosen, we can conclude that $B$ is $b$-divergent.
\end{itemize}
\end{proof}
In order to establish our main result, showing that $t^{s}_{(d_n)}(\mathbb{T})$ must be countable for strongly non-dli sequences (Theorem \ref{sconthmain2})  the main thing is to find conditions for $x\in\T$ which places them outside $t^{s}_{(d_n)}(\mathbb{T})$ and this will be specified in the next two lemmas.
\begin{lemma}\label{le2}
Let $(a_n)$ be an arithmetic sequence, and let $x\in\mathbb{T}$ be such that $supp(x)$ is co-finite. If $\bar{d}(L(supp(x)\setminus\supp_{q}(x)))>0$ then $x\notin t^{s}_{(d_n)}(\mathbb{T})$.
\end{lemma}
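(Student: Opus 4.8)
The goal is to exhibit a single $\epsilon>0$ for which $\{i\in\N:\|d_ix\|\ge\epsilon\}$ fails to have density zero; in fact I will bound its upper density below by a positive constant, which gives $x\notin t^s_{(d_n)}(\T)$. Write $S=\supp(x)\setminus\supp_q(x)$, so that by hypothesis $\bar d(L(S))>0$. The first step is to make $\|d_ix\|$ computable block by block. By Remark \ref{uconr1}, if $i\in I_k:=[n_{k-1},n_k-1]$ then $d_i=ra_{k-1}$ for some $r\in\{1,\dots,b_k-1\}$, and the block $I_k$ has exactly $b_k-1$ elements. Since $ra_{k-1}x\equiv r\{a_{k-1}x\}\pmod 1$, I get $\|d_ix\|=\|ra_{k-1}x\|=\|ry_k\|$, where $y_k:=\{a_{k-1}x\}$.

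Next I locate $y_k$. Applying Lemma \ref{uconlmain} with $n=k$ and letting $t\to\infty$ (using the tail bound \eqref{eqsum}), one obtains $y_k=\tfrac{c_k+\{a_kx\}}{b_k}$ with $\{a_kx\}\in[0,1)$, hence $y_k\in[\tfrac{c_k}{b_k},\tfrac{c_k+1}{b_k})$. For $k\in S$ one has $1\le c_k\le b_k-2$ (the left inequality because $k\in\supp(x)$, which the co-finiteness of $\supp(x)$ secures for all large $k$; the right inequality because $k\notin\supp_q(x)$), and therefore $\tfrac1{b_k}\le y_k\le 1-\tfrac1{b_k}$ with $b_k\ge 3$. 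This two-sided control is exactly where excluding $\supp_q(x)$ is essential: for $k\in\supp_q(x)$ the value $y_k$ can sit within $o(1/b_k)$ of $1$, and then every multiple $ry_k$ with $r\le b_k-1$ would remain near $0$, producing no large values at all.

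The combinatorial heart is the following uniform counting statement, which I expect to be the main obstacle: there are universal constants $\epsilon>0$ and $\delta>0$ such that for every $b\ge 3$ and every $y$ with $\tfrac1b\le y\le 1-\tfrac1b$, at least $\delta(b-1)$ of the integers $r\in\{1,\dots,b-1\}$ satisfy $\|ry\|\ge\epsilon$. Since $\|ry\|=\|r(1-y)\|$, I may assume $\tilde y:=\min(y,1-y)\in[\tfrac1b,\tfrac12]$. The points $r\tilde y$ ($r=1,\dots,b-1$) lie on the real line, spaced $\tilde y$ apart, and span a length $(b-1)\tilde y\ge 1-\tfrac1b$, so the orbit winds around $\Z$ at least once. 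Counting how many $r\tilde y$ can land in the $\epsilon$-neighbourhoods of integers — each such neighbourhood being an interval of length $2\epsilon$ that captures at most $\lfloor 2\epsilon/\tilde y\rfloor+1$ of the points, with at most $(b-1)\tilde y+2$ relevant neighbourhoods — and separating the regimes $\tilde y\le 2\epsilon$ and $\tilde y>2\epsilon$, one sees the ``bad'' count is at most a universal fraction of $b-1$; the genuinely worst case $\tilde y\approx\tfrac12$ still leaves the good fraction at least $\tfrac12-o(1)$. The whole difficulty is this uniformity in $b$: one must rule out the multiples of $y_k$ clustering near $0$, and the two constraints $y_k\ge\tfrac1{b_k}$ and $y_k\le 1-\tfrac1{b_k}$ are precisely what force the winding and so prevent clustering.

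Finally I transfer this to densities. For each $k\in S$ the block $I_k$ contains at least $\delta(b_k-1)$ indices $i$ with $\|d_ix\|\ge\epsilon$; call $G$ the set of all such good indices. Because the counting function of $L(S)$ increases only on $S$-blocks, the value $\bar d(L(S))$ is attained along right endpoints of $S$-blocks, so I may pick $N_j=n_{k_j}-1$ with $k_j\in S$ and $|L(S)\cap[1,N_j]|/N_j\to\bar d(L(S))$. Every $S$-block up to and including $I_{k_j}$ then lies entirely in $[1,N_j]$, whence $|G\cap[1,N_j]|\ge\delta\,|L(S)\cap[1,N_j]|$. Dividing by $N_j$ and passing to the limit yields $\bar d(G)\ge\delta\,\bar d(L(S))>0$. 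In particular $d(G)\ne 0$, so $d_nx\not\to 0$ statistically and $x\notin t^s_{(d_n)}(\T)$.
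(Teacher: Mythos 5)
Your argument is correct in outline and takes a genuinely different route from the paper's proof. The paper partitions $A=\supp(x)\setminus\supp_q(x)$ into three pieces according to whether $c_k/b_k$ is smaller than $1/m_0$, larger than $1-1/n_0$, or intermediate, and in each case builds explicit sub-blocks of $[n_{k-1},n_k-1]$ on which $\{d_ix\}$ (or $\|d_ix\|$) is trapped away from $0$; the intermediate case is further split into $b$-bounded and $b$-divergent parts via Lemma \ref{le1}, using the consecutive-multiples trick (if $r\{a_{k-1}x\}$ is near an integer then $(r+1)\{a_{k-1}x\}$ is not). Your single uniform counting claim --- universal $\epsilon,\delta>0$ such that for every $b\ge 3$ and every $y\in[1/b,\,1-1/b]$ at least $\delta(b-1)$ of the $r\in\{1,\dots,b-1\}$ satisfy $\|ry\|\ge\epsilon$ --- subsumes all three of the paper's cases at once, makes Lemma \ref{le1} and the partition unnecessary, and your transfer to upper density (monotonicity of the counting quotient inside blocks, so that $\bar d(L(S))$ is realized along right endpoints $n_{k_j}-1$ with $k_j\in S$, whence $\bar d(G)\ge\delta\,\bar d(L(S))$) is clean and fully correct. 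What the paper's route buys is that it never needs any estimate uniform in $b$ (each case carries its own constants, e.g.\ the bound $1/(2M)$ in the $b$-bounded subcase); what your route buys is a shorter, reusable, equidistribution-type statement.

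The one point where you must still do real work is the counting claim itself, and your sketch as written does not quite close. The bound (number of neighbourhoods)$\times$(points per neighbourhood) $\le\bigl((b-1)\tilde y+2\bigr)\bigl(\lfloor 2\epsilon/\tilde y\rfloor+1\bigr)$ carries additive constants that swamp $b-1$ when $b$ is small: already for $b=3$ it is at least $2=b-1$, so ``the bad count is a universal fraction of $b-1$'' does not follow from it. The claim is nevertheless true, and the patch is easy but should be stated: for $b$ below a fixed threshold $b_0$, the single value $r=1$ already suffices, since $\|y\|=\tilde y\ge 1/b\ge 1/b_0>\epsilon$ and $1\ge\delta(b-1)$ for $\delta$ small; for $b\ge b_0$ your two-regime count works, using spacing $\tilde y>2\epsilon$ to get at most one bad $r$ per neighbourhood in one regime, and $\tilde y\ge 1/b$ to control the term $4\epsilon/\tilde y\le 4\epsilon b$ in the other (concretely, $\epsilon=1/40$, $\delta=1/6$, $b_0=20$ can be checked to work). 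With that lemma fully proved, your proof of the statement is complete.
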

\begin{proof}
Let us choose $x\in\mathbb{T}$ for which $supp(x)$ is co-finite and $\bar{d}(L(supp(x)\setminus\supp_{q}(x))>0$. We set $A=supp(x)\setminus\supp_{q}(x)$. Then it is clear that $A$ is infinite since $\supp_{q}(x)$ cannot be co-finite (see Fact \ref{lemmanew}). Therefore, for each $k-1\in A$ and $r\in [1,b_k-1]$, we have
\begin{equation}\label{eq1}
\frac{rc_k}{b_k}\leq r\{a_{k-1}x\}\leq \frac{r(c_k+1)}{b_k}
\end{equation}
Next, for any $m_0,n_0\in\mathbb{N}$ with $m_0>9$ and $  n_0>12$, we define the following sets:
\begin{eqnarray*}
A_1 &=& \{n\in A:0<\frac{c_n}{b_n}<\frac{1}{m_0}\},\\
A_2 &=& \{n\in A: 1-\frac{1}{n_0}<\frac{c_n}{b_n}<1\},\\
 \mbox{  and,  }~
 A_3 &=& \{n\in A:
 \frac{1}{m_0}\leq\frac{c_n}{b_n}\leq 1-\frac{1}{n_0}\}.
 \end{eqnarray*}
Note that $(A_1, A_2, A_3)$ is a partition of $A$. Since $\bar{d}(L(A))>0$, we can derive that  $\sum\limits_{i=1}^{3}\bar{d}( L(A_i))>0$ which implies that $\bar{d}( L(A_i))>0$ for some $i\in\{1,2,3\}$. So, let us now consider the following cases:
\begin{itemize}
    \item [\textbf{Case I}:] Assume that $\bar{d}(L(A_1))>0$.
  For each $k\in A_1$, let us define
  $$
  B_k=\bigcup_{m=0}^{\lfloor \frac{c_k}{m_0}\rfloor}\left[n_{k-1}+\lfloor(m+\frac{1}{m_0})\frac{b_k}{c_k}\rfloor, n_{k-1}+\lfloor(m+\frac{4}{m_0})\frac{b_k}{c_k}\rfloor-1\right].
  $$
  Observe that for each $k\in A_1$, the set $B_k$ is the union of $\lfloor \frac{c_k}{m_0}\rfloor+1$ number of mutually disjoint non-empty sub-intervals. Note that the length of each such sub-interval
  \begin{align*}
      &=\lfloor(m+\frac{4}{m_0})\frac{b_k}{c_k}\rfloor-1-\lfloor(m+\frac{1}{m_0})\frac{b_k}{c_k}\rfloor\\
      &\geq \frac{3b_k}{m_0c_k}-2>\frac{b_k}{m_0c_k}~\mbox{ (since }~\frac{c_k}{b_k}<\frac{1}{m_0}~\mbox{ for each }~k\in A_1).
  \end{align*}\\
  Thus, the total length of $B_k$ must be
  $$
  \geq\frac{b_k}{m_0c_k}(\lfloor\frac{c_k}{m_0}\rfloor+1)\geq \frac{b_k}{m_0^2}.
  $$
  Further observe that, for each $k\in A_1$, we have $B_k\subseteq[n_{k-1},n_{k-1}+b_{k}-2]$. Let $B'=\bigcup_{k\in A_1}B_k$. Observe that $B'\subseteq L(A_1)$, and
\begin{eqnarray*}
\bar{d}(B')&=&\limsup_{n\to\infty}\frac{|B'\cap[1,n]|}{n}\\
  &\geq&\limsup_{\begin{smallmatrix} k \to \infty & \\k\in A_1 \end{smallmatrix}}\frac{|B'\cap[1,n_{k-1}+b_k-2]|}{n_{k-1}+b_k-2}\\
  &\geq& \frac{1}{m_0^2}\limsup_{\begin{smallmatrix} k \to \infty & \\k\in A_1 \end{smallmatrix}}\frac{\displaystyle\sum_{j=1}^{k}(b_j-2)}{n_{k-1}+b_k-2}\\
   &=& \frac{1}{m_0^2}\limsup_{\begin{smallmatrix} k \to \infty & \\k\in A_1 \end{smallmatrix}}\frac{|L(A_1)\cap[1,n_{k-1}+b_k-2]|}{n_{k-1}+b_k-2}\\
  &=&\frac{1}{m_0^2}\bar{d}(L(A_1))>0.
  \end{eqnarray*}
 Let $i\in B'$ be arbitrary. Then $i\in B_k$ for some $k\in A_1$ and so $i=n_{k-1}+r-1$ for some $r\in \left[\lfloor(m+\frac{1}{m_0})\frac{b_k}{c_k}\rfloor+1, \lfloor(m+\frac{4}{m_0})\frac{b_k}{c_k}\rfloor\right]$ and some $m\in [0,\lfloor \frac{c_k}{m_0}\rfloor]$. Then, in view of Eq (\ref{eq1}), we obtain
$$ \frac{rc_k}{b_k}\leq r\{a_{k-1}x\}\leq \frac{r(c_k+1)}{b_k}$$
Since $c_k\geq 1$ and $m\leq \frac{c_k}{m_0}$, we have
 \begin{eqnarray*}
  m+\frac{1}{m_0}\leq r\{a_{k-1}x\}\leq m+\frac{4}{m_0}+\frac{m}{c_k}+\frac{4}{m_0c_k} \leq m+\frac{9}{m_0}\\
     \Rightarrow\frac{1}{m_0}\leq \{r\{a_{k-1}x\}\}\leq \frac{9}{m_0}~\mbox{ (since}~m_0> 9),\\
     \mbox{i.e., }~  \frac{1}{m_0}\leq \{d_{i}x\} \leq \frac{9}{m_0}.
 \end{eqnarray*}
 Since $i\in B'$ was chosen arbitrarily and $\bar{d}(B')>0$, we can conclude that $x\notin t^{s}_{(d_n)}(\mathbb{T})$.\\
 \item[\textbf{Case II}:] Suppose that $\bar{d}(L(A_2))>0$. Note that, for each $k\in A_2$, we have
 $$
 0<\frac{b_k-c_k}{c_k}<\frac{1}{n_{0}}.
 $$
Here, for each $k\in A_2$, let us set
$$
B_k=\bigcup_{m=0}^{\lfloor \frac{(b_k-c_k)}{2n_0}\rfloor}\left[n_{k-1}+\lfloor(m+\frac{8}{n_0})\frac{b_k}{(b_k-c_k)}\rfloor, n_{k-1}+\lfloor(m+\frac{12}{n_0})\frac{b_k}{(b_k-c_k)}\rfloor-1\right].
$$
Observe that $B_k$ is the union of ${\lfloor \frac{(b_k-c_k)}{2n_0}\rfloor}+1$ number of mutually disjoint sub-intervals, each of whose length is equal to
\begin{align*}
  & \lfloor(m+\frac{12}{n_0})\frac{b_k}{(b_k-c_k)}\rfloor-1-\lfloor(m+\frac{8}{n_0})\frac{b_k}{(b_k-c_k)}\rfloor\\
  &\geq (m+\frac{12}{n_0})\frac{b_k}{(b_k-c_k)}-1-1-(m+\frac{8}{n_0})\frac{b_k}{(b_k-c_k)}\\
  &=\frac{4b_k}{n_0(b_k-c_k)}-2>\frac{2b_k}{n_0(b_k-c_k)},
\end{align*}
since $0<\frac{b_k-c_k}{c_k}<\frac{1}{n_{0}}$ for each $k\in A_2$. Hence, the total length of $B_k$ must be
$$
\geq\frac{2b_k}{n_0(b_k-c_k)} \left({\left\lfloor \frac{(b_k-c_k)}{2n_0}\right\rfloor}+1\right)\geq \frac{b_k}{n_0^2}.
$$
Also, for each $k\in A_2$, note that
$$
B_k\subseteq [n_{k-1},n_{k-1}+b_k-2].
$$
Now let us define $B'=\bigcup_{k\in A_2} B_k$.
Then, from the construction of $B'$, it easily follows that
$$
 B'\subseteq L(A_2)~\mbox{ and }~\bar{d}(B')\geq \frac{1}{n_0^2}\bar{d}(L(A_2))>0.
$$
 Pick an $i\in B'$. Then there exists $k\in A_2$ such that $i\in B_k$, i.e., $i=n_{k-1}+r-1$ for some $r\in \left[\lfloor(m+\frac{8}{n_0})\frac{b_k}{(b_k-c_k)}\rfloor+1, \lfloor(m+\frac{12}{n_0})\frac{b_k}{(b_k-c_k)}\rfloor\right]$ and some $m\in [0,\lfloor \frac{(b_k-c_k)}{2n_0}\rfloor]$. Therefore, by Eq (\ref{eq1}), we can write
 \begin{align*}
~&\frac{c_k}{b_k}\leq \{a_{k-1}x\}\leq \frac{c_k+1}{b_k}\\
\Rightarrow~& \frac{(b_k-c_k-1)}{b_k}\leq \|a_{k-1}x\|\leq \frac{(b_k-c_k)}{b_k}\\
\Rightarrow~& \frac{r(b_k-c_k)}{b_k}-\frac{r}{b_k}\leq r\|a_{k-1}x\|\leq \frac{r(b_k-c_k)}{b_k}\\
\Rightarrow~& m+\frac{8}{n_0}-(m+\frac{12}{n_0})\frac{1}{(b_k-c_k)}\leq r\|a_{k-1}x\|\leq m+\frac{12}{n_0}\\
\Rightarrow~& m+\frac{8}{n_0}-\frac{m}{(b_k-c_k)}-\frac{12}{n_0(b_k-c_k)}\leq r\|a_{k-1}x\|\leq m+\frac{12}{n_0}\\
\Rightarrow~& m+\frac{8}{n_0}-\frac{1}{2n_0}-\frac{6}{n_0}\leq r\|a_{k-1}x\|\leq m+\frac{12}{n_0}\\
~&\mbox{ (since }~b_k-c_k\geq 2~\mbox{ and }~\frac{m}{b_k-c_k}\leq\frac{1}{2n_0})\\
\Rightarrow~& \frac{3}{2n_0}\leq \{r\|a_{k-1}x\|\}\leq \frac{12}{n_0}~\mbox{ (since $n_0>12$)}\\
\Rightarrow~& \|d_ix\|=\|r\{a_{k-1}x\}\|=\|r\|a_{k-1}x\|\|\geq\min\left\{\frac{3}{2n_0},1-\frac{12}{n_0}\right\}.
 \end{align*}
 Since $i\in B'$ was picked arbitrarily and $\bar{d}(B')>0$, we deduce that $x\notin t^s_{(d_n)}(\mathbb{T})$.\\

 \item [\textbf{Case III}:] Finally assume that $\bar{d}(L(A_3))>0$. The following two subcases can arise:
 \begin{itemize}
     \item [Subcase (a1):] Assume that there exists a $b$-bounded set $A'\subseteq A_3$ such that $\bar{d}(L(A'))>0$. Then there exists $M>0$ such that $2\leq b_k\leq M$ for each $k\in A'$. Observe that, for each $n\in A'$, we can write
     $$
       \frac{1}{b_k}\leq \frac{c_k}{b_k}\leq \{a_{k-1}x\} \leq \frac{c_k+1}{b_k}\leq \frac{b_k-1}{b_k}
        =1-\frac{1}{b_k}\\
        $$
        $$
         \Rightarrow \frac{1}{M}\leq \{a_{k-1}x\}\leq 1-\frac{1}{M}.
      $$
For $\epsilon\in (0,\frac{1}{2M})$ we consider all such $r\in [1,b_{k}-1]$ with $k\in A'$ such that
$$
m-\epsilon<r\{a_{k-1}x\}<m+\epsilon~\mbox{ for some }~m\in [0,r].
$$
Subsequently, we obtain that
 \begin{align}\label{eq10}
     &\frac{1}{M}+m-\epsilon<(r+1)\{a_{k-1}x\}<m+\epsilon+1-\frac{1}{M}\nonumber\\
   \Rightarrow&~  \frac{1}{2M}+m\leq (r+1)\{a_{k-1}x\}\leq m+1-\frac{1}{2M}\nonumber,\\
   \mbox{i.e., }&~  \frac{1}{2M}\leq \{(r+1)\{a_{k-1}x\}\} \leq 1-\frac{1}{2M}.
 \end{align}
 This means that for any $k\in A'$, there are at least $\lfloor \frac{(b_k-1)}{2}\rfloor$ many $r\in [1,b_{k}-1]$ that satisfy Eq (\ref{eq10}), and we will denote $B_k'$ to represent the set of all such $r\in [1,b_{k}-1]$. For each $k\in A'$, we set  $B_k= \{n_{k-1}+r-1:r\in B_k'\}$. Let us define $B'=\bigcup_{k\in A'}B_k$. It is clear that
 $$
 B'\subseteq L(A')~\mbox{ and }~ \bar{d}(B')\geq \frac{1}{2} \bar{d}(L(A'))>0.
 $$
 Let $i\in B'$ be arbitrary. Then there exists $k\in A'$ such that $i=n_{k-1}+r-1$ for some $r\in B_k'$. Therefore, we have
 $$
 \{d_ix\}=\{r\{a_{k-1}x\}\}\in \left[\frac{1}{2M},1-\frac{1}{2M}\right].
 $$
 Since $\bar{d}(B')>0$ and $i\in B'$ was arbitrary, we can conclude that $x\notin t^{s}_{(d_n)}{(\mathbb{T})}$.
 \item[Subcase (a2):] Let us now assume that there does not exist any $A'\subseteq A_3$ such that $A'$ is $b$-bounded and $\bar{d}(L(A'))>0$. Then, in view of Lemma \ref{le1}, there exists $B\subseteq A_3$ with $d(L(A_3\setminus B))=0$ such that $B$ is $b$-divergent. So, without any loss of generality, we can assume that $b_n\geq 2n_0$ for each $n\in B$.
 Note that, for each $k\in B$, we have
 \begin{align*}
   \frac{1}{m_0} \leq \frac{c_k}{b_k}&\leq \{a_{k-1}x\}\leq \frac{c_k+1}{b_k}\leq 1-\frac{1}{n_0}+ \frac{1}{2n_0}=1-\frac{1}{2n_0}.
 \end{align*}
 For $0<\epsilon<\min\{\frac{1}{2m_0},\frac{1}{4n_0}\}$, we pick all such $r\in [1,b_{k}-1]$ with $k\in B$ such that
$$
m-\epsilon<r\{a_{k-1}x\}<m+\epsilon~\mbox{ for some }~m\in [0,r].
$$
Therefore, observe that
\begin{align}\label{eq11}
     &\frac{1}{m_0}+m-\epsilon<(r+1)\{a_{k-1}x\}<m+\epsilon+1-\frac{1}{2n_0}\nonumber\\
   \Rightarrow&~  \frac{1}{2m_0}+m \leq (r+1)\{a_{k-1}x\} \leq m+1-\frac{1}{4n_0}\nonumber,\\
   \mbox{i.e., }&~  \frac{1}{2m_0}\leq \{(r+1)\{a_{k-1}x\}\}\leq 1-\frac{1}{4n_0} ~\mbox{ (since }~n_0> 12).
 \end{align}
 This entails that for each $k\in B$, there are at least $\lfloor \frac{(b_k-1)}{2}\rfloor$ many $r\in [1,b_{k}-1]$ that satisfy Eq (\ref{eq11}). Let $B_k'$ be the set of all such $r\in [1,b_{k}-1]$. Now, for each $k\in B$, we define $B_k= \{n_{k-1}+r-1:r\in B_k'\}$. Let us set $B'=\bigcup_{k\in B}B_k$. Consequently it is evident that
 $$
 B'\subseteq L(B)~\mbox{ and }~ \bar{d}(B')\geq \frac{1}{2} \bar{d}(L(B))>0.
 $$
 Pick an $i\in B'$. Then there exists $k\in B$ such that $i=n_{k-1}+r-1$ for some $r\in B_k'$. Thus, we get that
 $$
 \{d_ix\}=\{r\{a_{k-1}x\}\}\in \left[\frac{1}{2m_0},1-\frac{1}{4n_0}\right].
 $$
 Since $\bar{d}(B')>0$ and $i\in B'$ was chosen arbitrarily, we can infer that $x\notin t^{s}_{(d_n)}{(\mathbb{T})}$.
 \end{itemize}
\end{itemize}
Finally, in view of Case I, Case II and Case III, we conclude that $x\notin t^{s}_{(d_n)}{(\mathbb{T})}$.
\end{proof}
\begin{lemma}\label{le3}
Let $(a_n)$ be an arithmetic sequence, and let $x\in\mathbb{T}$ be such that $supp(x)$ is infinite but non co-finite. If $\bar{d}(L(supp(x)\setminus(supp(x)-1)))>0$ then $x\notin t^{s}_{(d_n)}(\mathbb{T})$.
\end{lemma}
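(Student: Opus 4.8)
The plan is to follow the strategy of the proof of Lemma \ref{le2}, adapting it to the non co-finite setting through a careful choice of index set combined with a sharper estimate. Write $A=supp(x)\setminus(supp(x)-1)$, so that $k\in A$ means precisely that $c_k\neq 0$ while $c_{k+1}=0$; since $\bar{d}(L(A))>0$ the set $A$ is infinite. The first step is to exploit the vanishing of $c_{k+1}$ in the recursion of Lemma \ref{uconlmain}: for $k\in A$, Eq (\ref{eqlemain2}) collapses to
$$\frac{c_k}{b_k}\leq \{a_{k-1}x\}\leq \frac{c_k}{b_k}+\frac{1}{b_kb_{k+1}},$$
which is strictly sharper than the generic bound Eq (\ref{eq1}). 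As in Lemma \ref{le2}, for $i\in L(A)$ I write $i=n_{k-1}+r-1$ with $k\in A$ and $r\in[1,b_k-1]$, so that $d_i=ra_{k-1}$ and $\{d_ix\}=\{r\{a_{k-1}x\}\}$; multiplying the estimate by $r$ gives $r\{a_{k-1}x\}\in\big[\frac{rc_k}{b_k},\frac{rc_k}{b_k}+\frac{1}{b_{k+1}}\big]$.

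Next I would split $A$ into the three pieces used in Lemma \ref{le2}, according to whether $\frac{c_k}{b_k}$ is small, close to $1$, or in an intermediate band, and treat them exactly as Cases I, II and III there. In each case one isolates, inside each block $[n_{k-1},n_k-1]$, a union of sub-intervals of admissible $r$ for which $\{r\{a_{k-1}x\}\}$ (respectively $\|r\{a_{k-1}x\}\|$) is pinned inside a fixed compact sub-interval of $(0,1)$; summing the lengths of these sub-intervals and dividing by $n_{k-1}+b_k-2$ shows that the resulting set $B'\subseteq L(A_i)$ satisfies $\bar{d}(B')\geq c\,\bar{d}(L(A_i))>0$ for a positive constant $c$ depending only on the chosen thresholds. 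The intermediate band again bifurcates into a $b$-bounded subcase, handled directly, and a $b$-divergent subcase, where Lemma \ref{le1} is invoked to discard a density-zero part of $L(A_3)$ and reduce to a $b$-divergent set before running the same construction.

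The one genuinely new point — and the reason for choosing this particular $A$ — arises in the ``close to $1$'' case. Unlike in Lemma \ref{le2}, the set $A$ is not forbidden from meeting $supp_q(x)$, so we may have $c_k=b_k-1$. For such $k$ the crude bound $\{a_{k-1}x\}\leq \frac{c_k+1}{b_k}$ degenerates to $1$ and yields no lower control on $\|a_{k-1}x\|$; this is exactly where $c_{k+1}=0$ rescues the argument, since the sharper estimate gives $\|a_{k-1}x\|\in\big[\frac{b_k-c_k}{b_k}-\frac{1}{b_kb_{k+1}},\frac{b_k-c_k}{b_k}\big]$ and hence $\|a_{k-1}x\|\geq \frac{1}{b_k}\big(1-\frac{1}{b_{k+1}}\big)\geq \frac{1}{2b_k}$ even when $c_k=b_k-1$. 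With this replacement the length estimates of Case II of Lemma \ref{le2} go through verbatim up to adjusting the numerical constants $m_0,n_0$. In every case we thus produce a set of positive upper density on which $\{d_ix\}$ remains bounded away from $0$, which is precisely the obstruction to statistical convergence of $d_nx$ to $0$; hence $x\notin t^s_{(d_n)}(\mathbb{T})$. I expect the bookkeeping of these constants — guaranteeing that the pinned sub-intervals stay inside $(0,1)$ uniformly over all $k$ in a given piece — to be the most delicate part, exactly as in Lemma \ref{le2}.
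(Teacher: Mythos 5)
Your proposal is correct and follows essentially the same route as the paper's own proof: the same set $A=supp(x)\setminus(supp(x)-1)$, the same sharpened estimate $\frac{c_k}{b_k}\leq\{a_{k-1}x\}\leq\frac{c_k}{b_k}+\frac{1}{b_kb_{k+1}}$ obtained from $c_{k+1}=0$, the same three-way partition with the case analysis imported from Lemma \ref{le2}, and the same bifurcation via Lemma \ref{le1} in the intermediate band. In particular, your identification of the ``close to $1$'' case as the place where the sharper estimate is genuinely needed (since $A$ may meet $supp_q(x)$, unlike in Lemma \ref{le2}) is exactly the point the paper's proof turns on.
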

\begin{proof}
  Let us choose $x\in\mathbb{T}$ such that both $supp(x)$ and $\mathbb{N}\setminus supp(x)$ are infinite and  $\bar{d}(L(supp(x)\setminus(supp(x)-1)))>0$. We set $A=supp(x)\setminus(supp(x)-1)$. Since $\mathbb{N}\setminus\supp(x)$ is infinite, \cite[Claim 3.1]{DI1} ensures that $A$ is infinite. Note that for each $n\in A$, we have $c_n\geq 1$ but $c_{n+1}=0$. Therefore, observe that
  \begin{align}\label{eq12}
      \{a_{n-1}x\}=a_{n-1}\sum_{i=n}^{\infty}\frac{c_i}{a_i}&=\frac{a_{n-1}c_n}{a_n}+{a_{n-1}}\sum_{i=n+2}^{\infty}\frac{c_i}{a_i}\nonumber\\
               &\leq \frac{a_{n-1}c_n}{a_n}+\frac{a_{n-1}}{a_{n+1}}\nonumber\\
               &=\frac{c_n}{b_n}+\frac{1}{b_nb_{n+1}}\leq \frac{c_n+\frac{1}{2}}{b_n},
\end{align}
and,
\begin{equation}\label{eq13}
    \{a_{n-1}x\}=a_{n-1}\sum_{i=n}^{\infty}\frac{c_i}{a_i}\geq \frac{a_{n-1}c_n}{a_n}=\frac{c_n}{b_n}.
\end{equation}
Subsequently, for each $n\in A$, it follows that
\begin{equation}\label{eq14}
\frac{c_n}{b_n}\leq\{a_{n-1}x\}\leq \frac{c_n+\frac{1}{2}}{b_n}< \frac{c_n+1}{b_n}.
\end{equation}
For any fixed $m_0,n_0\in\mathbb{N}$ with $m_0>9$ and $n_0>12$,
we next consider the following sets:
\begin{eqnarray*}
A_1 &=& \{n\in A:0<\frac{c_n}{b_n}<\frac{1}{m_0}\}, \\
A_2 &=& \{n\in A:1-\frac{1}{n_0}<\frac{c_n}{b_n}<1\}, \\
\mbox{  and,  }~
A_3 &=& \{n\in A:\frac{1}{m_0}\leq\frac{c_n}{b_n}\leq 1-\frac{1}{n_0}\}.
\end{eqnarray*}
Since $A=A_1\cup A_2\cup A_3$, we must have $\sum\limits_{i=1}^{3}\bar{d}(L(A_i))>0$. So, the following cases can arise:\\
\begin{itemize}
\item [\textbf{Case I}:] Suppose that $\bar{d}(L(A_1))>0$.
In this case,  we set
$$
B'=\bigcup_{k\in A_1}\bigcup_{m=0}^{\lfloor \frac{c_k}{m_0}\rfloor}\left[n_{k-1}+\lfloor(m+\frac{1}{m_0})\frac{b_k}{c_k}\rfloor, n_{k-1}+\lfloor(m+\frac{4}{m_0})\frac{b_k}{c_k}\rfloor-1\right].
$$
Then, proceeding exactly as in  Case I of Lemma \ref{le2}, we obtain that $\bar{d}(B')>0$ and for each $i\in B'$, we have
  $$
  \frac{1}{m_0}\leq \{d_{i}x\} \leq \frac{9}{m_0}.
  $$
  This ensures that $x\notin t^s_{(d_n)}(\mathbb{T})$.\\
   \item [\textbf{Case II}:] Assume that $\bar{d}(L(A_2))>0$. Here, we set
   $$
B'=\bigcup_{k\in A_2}\bigcup_{m=0}^{\lfloor \frac{(b_k-c_k)}{2n_0}\rfloor}\left[n_{k-1}+\lfloor(m+\frac{8}{n_0})\frac{b_k}{(b_k-c_k)}\rfloor, n_{k-1}+\lfloor(m+\frac{12}{n_0})\frac{b_k}{(b_k-c_k)}\rfloor-1\right].
$$
Then, in view of Case II of Lemma \ref{le2}, we obtain that
$$
B'\subseteq L(A_2)~\mbox{ and }~\bar{d}(B')>0.
$$
 Now, pick any $i\in B'$. Then there exists $k\in A_2$ such that $i\in B_k$, i.e., $i=n_{k-1}+r-1$ for some $r\in \left[\lfloor(m+\frac{8}{n_0})\frac{b_k}{(b_k-c_k)}\rfloor+1, \lfloor(m+\frac{12}{n_0})\frac{b_k}{(b_k-c_k)}\rfloor\right]$ and some $m\in [0,\lfloor \frac{(b_k-c_k)}{2n_0}\rfloor]$. Since $\frac{c_k}{b_k}>\frac{1}{2}$ for each $k\in A_2$, from  Eq (\ref{eq12}) and Eq (\ref{eq13}), it follows that
\begin{align*}
~&\frac{b_k-c_k-\frac{1}{2}}{b_k}\leq \|a_{k-1}x\|\leq \frac{b_k-c_k}{b_k}\\
\Rightarrow~ &\frac{r(b_k-c_k)}{b_k}-\frac{r}{2b_k}\leq r\|a_{k-1}x\|\leq \frac{r(b_k-c_k)}{b_k}\\
\Rightarrow~ &m+\frac{8}{n_0}-(m+\frac{12}{n_0})\frac{1}{2(b_k-c_k)}\leq r\|a_{k-1}x\|\leq m+\frac{12}{n_0}\\
\Rightarrow~ &m+\frac{8}{n_0}-\left(\frac{m}{2(b_k-c_k)}+\frac{6}{n_0(b_k-c_k)}\right)\leq r\|a_{k-1}x\|\leq m+\frac{12}{n_0}\\
\Rightarrow~ &m+\frac{8}{n_0}-\frac{1}{4n_0}-\frac{6}{n_0}\leq r\|a_{k-1}x\|\leq m+\frac{12}{n_0}~\\
&\mbox{ (since }~c_n\leq b_n-1~\mbox{ and }~m\leq \frac{(b_k-c_k)}{2n_0})\\
\Rightarrow~ &\frac{7}{4n_0}\leq \{r\|a_{k-1}x\|\}\leq \frac{12}{n_0}\\
\Rightarrow ~& \|d_ix\|=\|r\{a_{k-1}x\}\|=\|r\|a_{k-1}x\|\|\geq\min\left\{\frac{7}{4n_0},1-\frac{12}{n_0}\right\}.
\end{align*}
Since $i\in B'$ was arbitrary and $\bar{d}(B')>0$, we can conclude that $x\notin t^s_{(d_n)}(\mathbb{T})$.\\
\item[\textbf{Case III}:] Assume that $\bar{d}(L(A_3))>0$.
\begin{itemize}
    \item [$\bullet$] First assume that there exists a $b$-bounded subset $A'\subseteq A_3$ such that $\bar{d}(L(A'))>0$. Then we can find $M>0$ such that $2\leq b_k\leq M$ for all $k\in A'$. Note that, in view of Eq (\ref{eq12}) and Eq (\ref{eq13}), for each $k\in A'$, we must have
    \begin{align*}
        \frac{1}{b_k}&\leq \frac{c_k}{b_k}\leq \{a_{k-1}x\}\leq \frac{c_k+\frac{1}{2}}{b_k}
        \leq\frac{b_k-1+\frac{1}{2}}{b_k}
        = 1-\frac{1}{2b_k} \\
        &\Rightarrow\frac{1}{M}\leq \{a_{k-1}x\}\leq 1-\frac{1}{2M}.
    \end{align*}
    For $\epsilon\in (0,\frac{1}{4M})$ we collect all such $r\in [1,b_{k}-1]$ with $k\in A'$ such that
$$
m-\epsilon<r\{a_{k-1}x\}<m+\epsilon~\mbox{ for some }~m\in [0,r].
$$
For this choice we have
 \begin{align}\label{eq15}
     &~\frac{1}{M}+m-\epsilon<(r+1)\{a_{k-1}x\}<m+\epsilon+1-\frac{1}{2M}\nonumber\\
   \Rightarrow&~  \frac{3}{4M}+m<(r+1)\{a_{k-1}x\}<m+1-\frac{1}{4M}\nonumber,\\
   \mbox{i.e., }&~  \frac{3}{4M}<\{(r+1)\{a_{k-1}x\}\}<1-\frac{1}{4M}.
 \end{align}
 This means that for any $k\in A'$, there are at least $\lfloor \frac{(b_k-1)}{2}\rfloor$ many $r\in [1,b_{k}-1]$ that satisfy Eq (\ref{eq15}), and let us denote by $B_k'$ to represent the set of all such $r\in [1,b_{k}-1]$. For each $k\in A'$, we set  $B_k= \{n_{k-1}+r-1:r\in B_k'\}$. Let us define $B'=\bigcup_{k\in A'}B_k$. Then it is clear that
 $$
 B'\subseteq L(A')~\mbox{ and }~ \bar{d}(B')\geq \frac{1}{2} \bar{d}(L(A'))>0.
 $$
 Pick an $i\in B'$. Choose $k\in A'$ such that $i=n_{k-1}+r-1$ for some $r\in B_k'$. Therefore, we have
 $$
 \{d_ix\}=\{r\{a_{k-1}x\}\}\in \left[\frac{3}{4M},1-\frac{1}{4M}\right].
 $$
 Since $\bar{d}(B')>0$ and $i\in B'$ was arbitrary, we can deduce that $x\notin t^{s}_{(d_n)}{(\mathbb{T})}$.\\

 \item [$\bullet$] Let us now assume that there does not exist any $A'\subseteq A_3$ such that $A'$ is $b$-bounded and $\bar{d}(L(A'))>0$. Therefore, by Lemma \ref{le1}, there exists $B\subseteq A_3$  such that $B$ is $b$-divergent and  $d(L(A_3\setminus B))=0$. Now, utilizing Eq (\ref{eq14}) and following the line of argument as in subcase (a2) of Case III in Lemma \ref{le2}, we can find a subset $B'\subseteq \mathbb{N}$ with $\bar{d}(B')>0$ such that for each $i\in B'$, we will have
 $$
 \{d_ix\}\in \left[\frac{1}{2m_0},1-\frac{1}{4n_0}\right].
 $$
 This also ensures that $x\notin t^{s}_{(d_n)}{(\mathbb{T})}$.
\end{itemize}
\end{itemize}
Finally, in view of Case I, Case II, and Case III, we can conclude that $x\notin t^{s}_{(d_n)}{(\mathbb{T})}$.
\end{proof}
Now we are ready to present the most significant result, namely Theorem \ref{sconthmain2}, which indeed offers a positive solution to Problem \ref{prob2}.

\begin{theorem}\label{sconthmain2}
If $(a_n)$ is strongly non dli then $t^{s}_{(d_n)}{(\mathbb{T})}=t_{(d_n)}{(\mathbb{T})}$.
\end{theorem}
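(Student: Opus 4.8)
The plan is to establish only the non-trivial inclusion $t^{s}_{(d_n)}(\mathbb{T})\subseteq t_{(d_n)}(\mathbb{T})$, since the reverse inclusion holds for every increasing sequence because ordinary convergence implies statistical convergence (so $t_{(d_n)}(\mathbb{T})\subseteq t^{s}_{(d_n)}(\mathbb{T})$ always, cf. Theorem \ref{theoremA} applied to the sequence $(d_n)$). The guiding observation is that Lemma \ref{le2} and Lemma \ref{le3} together cover exactly the two ways in which an element can have infinite support, so the entire argument reduces to a case analysis on $\supp(x)$ in which the strongly non dli hypothesis supplies the positivity of density needed to invoke those lemmas.

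First I would settle the finite-support case, which simultaneously pins down $t_{(d_n)}(\mathbb{T})$ from below. If $\supp(x)$ is finite, say contained in $[1,N]$, then for every $k\ge N$ the number $a_k x=\sum_{n\le N}c_n\,a_k/a_n$ is an integer because $a_n\mid a_k$, so $\{a_k x\}=0$ and hence $d_m x\in\mathbb{Z}$ for all sufficiently large $m$; thus $x\in t_{(d_n)}(\mathbb{T})$. Consequently it suffices to prove that every $x\in t^{s}_{(d_n)}(\mathbb{T})$ has finite support, for then $t^{s}_{(d_n)}(\mathbb{T})\subseteq\{x:\supp(x)\ \mbox{finite}\}\subseteq t_{(d_n)}(\mathbb{T})$, which combined with the trivial inclusion yields the asserted equality.

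So I would suppose, towards a contradiction, that $x\in t^{s}_{(d_n)}(\mathbb{T})$ has infinite support and split into two cases. If $\supp(x)$ is co-finite, set $A=\supp(x)\setminus\supp_q(x)$; since the canonical representation forces $c_n<b_n-1$ for infinitely many $n$ (Fact \ref{lemmanew}) while $c_n\ne 0$ for all large $n$, the set $A$ is infinite. The strongly non dli hypothesis then gives $\bar d(L(A))>0$, and Lemma \ref{le2} yields $x\notin t^{s}_{(d_n)}(\mathbb{T})$, a contradiction. If instead $\supp(x)$ is infinite but not co-finite, set $A=\supp(x)\setminus(\supp(x)-1)$; because $\mathbb{N}\setminus\supp(x)$ is infinite, \cite[Claim 3.1]{DI1} guarantees that $A$ is infinite, so once more $\bar d(L(A))>0$ by the hypothesis, and Lemma \ref{le3} forces $x\notin t^{s}_{(d_n)}(\mathbb{T})$, again a contradiction. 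Hence $\supp(x)$ must be finite.

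Most of the genuine work is already absorbed into Lemmas \ref{le2} and \ref{le3}, so the remaining points are combinatorial: verifying in each case that the relevant set $A$ is infinite (so that the defining property $\bar d(L(A))>0$ of a strongly non dli sequence is actually available) and checking that the finite, co-finite, and infinite-non-co-finite cases exhaust all possibilities. I expect the only mild subtlety to lie in the co-finite case, where one must invoke Fact \ref{lemmanew} to exclude the degenerate situation in which $\supp_q(x)$ eventually swallows all of $\supp(x)$; everything else is routine bookkeeping.
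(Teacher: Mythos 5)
Your proposal is correct and takes essentially the same approach as the paper's proof: the trivial inclusion $t_{(d_n)}(\mathbb{T})\subseteq t^{s}_{(d_n)}(\mathbb{T})$ together with the case analysis on infinite support (co-finite support handled by Lemma \ref{le2}, infinite non-co-finite support by Lemma \ref{le3}), with the strongly non dli hypothesis supplying the density positivity $\bar{d}(L(A))>0$ in each case. The only difference is cosmetic: where the paper invokes \cite[Theorem 2.3]{DG8} to identify $t_{(d_n)}(\mathbb{T})$ with the finite-support elements, you verify the needed inclusion $\{x:\supp(x)\mbox{ finite}\}\subseteq t_{(d_n)}(\mathbb{T})$ directly via the divisibility argument.
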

\begin{proof}
Note that for any arithmetic sequence $(a_n)$, we always have $t_{(d_n)}{(\mathbb{T})}\subseteq t^s_{(d_n)}{(\mathbb{T})}$.
We assume that $(a_n)$ is strongly non dli. Consider $x\in\T$ such that $supp(x)$ is infinite.

First assume that $supp(x)$ is cofinite. Then it is evident that $supp(x)\setminus supp_{q}(x)$ is also infinite. Since $(a_n)$ is strongly non dli, we must have $\bar{d}(L(supp(x)\setminus\supp_{q}(x)))>0$. Therefore, Lemma \ref{le2} ensures that $x\not\in t^{s}_{(d_n)}(\mathbb{T})$.

Next, let us consider the case when $supp(x)$ is not cofinite. Then $supp(x)\setminus (supp(x)-1)$ must be infinite. As $(a_n)$ is strongly non dli, we can conclude that $\bar{d}(L(supp(x)\setminus (supp(x)-1)))>0$. Consequently, Lemma \ref{le3} implies that $x\not\in t^{s}_{(d_n)}(\mathbb{T})$.

So, in both cases, we obtain $x\not\in t^{s}_{(d_n)}(\mathbb{T})$ whenever $supp(x)$ is infinite. Thus, \cite[Theorem 2.3]{DG8} ensures that $t^{s}_{(d_n)}{(\mathbb{T})}=t_{(d_n)}{(\mathbb{T})}$.
\end{proof}

As a corollary of this result we get another interesting observation namely Corollary \ref{sconcoromain2} which provides negative solutions to Problem \ref{prob1} and Problem \ref{prob4} while providing a positive solution to Problem \ref{prob3}.
\begin{corollary}\label{sconcoromain2}
If $(a_n)$ is strongly non dli then $t^{s}_{(d_n)}{(\mathbb{T})}$ is countable.
\end{corollary}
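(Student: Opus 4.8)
The plan is to deduce countability directly from the coincidence of the two subgroups established just above. First I would observe that, since $(a_n)$ is strongly non dli, Theorem \ref{sconthmain2} yields the equality $t^{s}_{(d_n)}{(\mathbb{T})}=t_{(d_n)}{(\mathbb{T})}$. This reduces the entire question to controlling the size of the ordinary characterized subgroup $t_{(d_n)}{(\mathbb{T})}$, which is a much better understood object.

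The second step is to invoke the known structural result for characterized subgroups generated by the non-arithmetic sequence $(d_n)$. By \cite[Corollary 2.4, (iii)]{DG8}, the subgroup $t_{(d_n)}{(\mathbb{T})}$ is countable; this is exactly the fact already used in the proof of Corollary \ref{sconcoromain1}. Combining this with the equality from the first step immediately gives that $t^{s}_{(d_n)}{(\mathbb{T})}$ is countable, which is all that is claimed.

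As for the main obstacle, for the corollary itself there is essentially none: all of the analytic work has already been carried out. The genuinely delicate part is the content of Theorem \ref{sconthmain2}, namely establishing that every $x\in\mathbb{T}$ with infinite $supp(x)$ fails to lie in $t^{s}_{(d_n)}{(\mathbb{T})}$. That in turn rests on the two case analyses of Lemma \ref{le2} (cofinite support) and Lemma \ref{le3} (infinite non-cofinite support), where one exhibits a set $B'$ of positive upper density on which $\{d_i x\}$ stays bounded away from $0$, using the block estimates and the strong non dli hypothesis $\bar{d}(L(A))>0$. Once that equality is in place, the countability of $t^{s}_{(d_n)}{(\mathbb{T})}$ is a one-line consequence of the cited description of $t_{(d_n)}{(\mathbb{T})}$, so I would present the corollary as a short formal deduction rather than re-entering the density computations.
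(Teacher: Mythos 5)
Your proposal is correct and is exactly the paper's proof: Theorem \ref{sconthmain2} gives $t^{s}_{(d_n)}(\mathbb{T})=t_{(d_n)}(\mathbb{T})$, and the countability of $t_{(d_n)}(\mathbb{T})$ from \cite[Corollary 2.4, (iii)]{DG8} (the same fact used in Corollary \ref{sconcoromain1}) finishes it. Nothing is missing.
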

\begin{proof}
Follows directly from Theorem \ref{sconthmain2} and \cite[Corollary 2.4, (iii)]{DG8}.
\end{proof}
\begin{example}
Consider the arithmetic sequence $(2^{\frac{n(n+1)}{2}})$. In Example \ref{exsndli1}, we have already seen that $(2^{\frac{n^2+n}{2}})$ is strongly non dli. Therefore, Corollary \ref{sconcoromain2} ensures that the corresponding subgroup $t^s_{(d_n)}{(\T)}$ is countable.
\end{example}

\vspace{.3cm}
\subsection{Characterizable subgroups. \vspace{.3cm} \\}
In 2022, Das and Ghosh have shown that for any arithmetic sequence $(a_n)$, the corresponding statistically characterized subgroup $t^s_{(a_n)}(\T)$ cannot be characterized by any sequence of integers (see \cite[Theorem 2.7]{DG}). This result actually provides the novelty of this notion of statistically characterized subgroups. Also, we have observed that for any $q$-bounded arithmetic sequence $(a_n)$ the subgroup $t^s_{(d_n)}(\T)$ actually coincides with the subgroup $t^s_{(a_n)}(\T)$. As a consequence, we have found examples of statistically characterized subgroups corresponding to the sequence $(d_n)$ that cannot be characterized.
\begin{definition}
    A subgroup $H$ of $\T$ is called characterizable if there exists a sequence of integers $(u_n)$ such that $H=t_{(u_n)}(\T)$.
\end{definition}
However, in an interesting departure, for this non-arithmetic sequence $(d_n)$ we have also seen that the subgroup $t^s_{(d_n)}(\T)$ can be characterizable as well (see Corollary \ref{sconcoromain2}). This naturally leads to the question: for which sequences $(d_n)$ does the subgroup $t^s_{(d_n)}(\T)$ fail to be characterizable?

Before answering this question, let us recall an interesting observation from \cite{DG}.
\begin{lemma}\label{lg1}\cite[Lemma 2.5]{DG}
Let $(a_n)$ be an arithmetic sequence and $(u_n)$ be an increasing sequence of naturals. If $G=\{\frac{1}{a_n}:n\in\N\}\subseteq t_{(u_n)}(\T)$ then $u_n$ must be of the form $a_{k_n}v_n$ where $k_n\to\infty$ and $b_{k_n+1}$ does not divide $v_n$ for any $n\in\N$.
\end{lemma}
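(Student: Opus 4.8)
The plan is to read off the required index $k_n$ and cofactor $v_n$ directly from the divisibility structure of each $u_n$, and then to show that the only nontrivial assertion, $k_n\to\infty$, is forced by the single requirement that every point $\frac{1}{a_m}$ lie in $t_{(u_n)}(\T)$. First I would fix the natural candidates: for each $n$, since $a_0=1$ divides $u_n$ and $a_k\to\infty$, only finitely many indices $k$ satisfy $a_k\mid u_n$, so I may take $k_n$ to be the largest such index and write $u_n=a_{k_n}v_n$. By maximality $a_{k_n+1}\nmid u_n$; as $a_{k_n+1}=b_{k_n+1}a_{k_n}$ and $u_n=a_{k_n}v_n$, this is exactly the statement $b_{k_n+1}\nmid v_n$. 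Thus the prescribed \emph{form} of $u_n$ is automatic from the construction, and the whole content of the lemma reduces to proving $k_n\to\infty$.

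To establish this I would argue by contradiction. If $(k_n)$ does not diverge to infinity, then $k_n\le N$ for some fixed $N$ and infinitely many $n$; since these values lie in the finite set $\{0,1,\dots,N\}$, some fixed index $K$ is attained by $k_n$ along an infinite set $S$ of indices. I then invoke the hypothesis at the single point $\frac{1}{a_{K+1}}\in G\subseteq t_{(u_n)}(\T)$, which by definition of the characterized subgroup means $\big\|\tfrac{u_n}{a_{K+1}}\big\|\to 0$ as $n\to\infty$. The crux is the residue computation on $S$: for $n\in S$ we have $u_n=a_K v_n$ with $b_{K+1}\nmid v_n$, so
$$
\Big\|\frac{u_n}{a_{K+1}}\Big\|=\Big\|\frac{a_K v_n}{b_{K+1}a_K}\Big\|=\Big\|\frac{v_n}{b_{K+1}}\Big\|\ge \frac{1}{b_{K+1}}>0,
$$
where the lower bound holds because $v_n\bmod b_{K+1}\in\{1,\dots,b_{K+1}-1\}$ forces the distance to the integers to be at least $\frac{1}{b_{K+1}}$. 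This fixed positive lower bound along $S$ contradicts $\big\|\tfrac{u_n}{a_{K+1}}\big\|\to 0$, and hence $k_n\to\infty$.

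I expect the only genuinely non-routine step to be the choice of the test point. Everything else is bookkeeping about maximal divisibility, but recognizing that one should probe the hypothesis precisely at $\frac{1}{a_{K+1}}$, one level above the stalled index $K$, and that the obstruction to convergence is exactly the non-divisibility $b_{K+1}\nmid v_n$, is the idea that makes the argument close.
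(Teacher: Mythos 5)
Your proof is correct: the maximal factorization $u_n=a_{k_n}v_n$ (which automatically yields $b_{k_n+1}\nmid v_n$), followed by testing the hypothesis at the single point $\frac{1}{a_{K+1}}$ to get the lower bound $\big\|\frac{u_n}{a_{K+1}}\big\|=\big\|\frac{v_n}{b_{K+1}}\big\|\geq\frac{1}{b_{K+1}}$ along the subsequence where $k_n=K$, is exactly the standard argument, and every step (pigeonhole, the residue computation, the membership $\frac{1}{a_{K+1}}\in G$) checks out. The paper itself only cites this lemma from its source without reproducing the proof, and your argument is essentially the same approach that source takes, so there is nothing further to compare.
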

\begin{lemma}\label{arbault lemma}
    Let $(a_n)$ be an arithmetic sequence and $(u_n)$ be an increasing sequence of naturals such that $u_n=a_{k_n}v_n$ where $k_n\to\infty$ and $b_{{k_n}+1}$ does not divide $v_n$ for any $n\in\mathbb{N}$. Let $x\in\mathbb{T}$ be such that
    \begin{itemize}
        \item [(i)]
        $ supp(x)=\{k_{s_i}+1:i\in\mathbb{N}\}~\mbox{ and }~c_i=\left\lfloor\frac{b_i}{m_i}\right\rfloor~\mbox{ for all }~ i\in\supp(x)$,
         where $1<m_i\leq b_i$, and
         \item[(ii)] $a_{k_{s_{(i+1)}}}\geq 8u_{s_i}$ for all $i\in\mathbb{N}$.
    \end{itemize}
     Then $m_i$ can be chosen in such a way that $x\notin t_{(u_n)}(\mathbb{T})$.
\end{lemma}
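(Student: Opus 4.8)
The plan is to show that $\|u_nx\|$ fails to tend to $0$ by producing a subsequence that stays bounded away from $0$; since $t_{(u_n)}(\T)=\{x:\|u_nx\|\to 0\}$, this forces $x\notin t_{(u_n)}(\T)$. Write $\ell_i:=k_{s_i}+1$, so that $\supp(x)=\{\ell_1<\ell_2<\cdots\}$ and $c_{\ell_i}=\lfloor b_{\ell_i}/m_{\ell_i}\rfloor$. The natural subsequence to test is $(u_{s_n})$, since $u_{s_n}=a_{k_{s_n}}v_{s_n}=a_{\ell_n-1}v_{s_n}$ is tuned to the $n$-th nonzero digit of $x$.

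First I would compute $\{a_{\ell_n-1}x\}$. Splitting $a_{\ell_n-1}x=\sum_i \tfrac{a_{\ell_n-1}c_{\ell_i}}{a_{\ell_i}}$, each term with $i<n$ is an integer (as $a_{\ell_i}\mid a_{\ell_n-1}$), the term $i=n$ equals $c_{\ell_n}/b_{\ell_n}$, and the tail $i>n$ is, by (\ref{eqsum}), at most $a_{\ell_n-1}/a_{\ell_{n+1}-1}=a_{k_{s_n}}/a_{k_{s_{n+1}}}$. Since $c_{\ell_n}/b_{\ell_n}\le\tfrac12$, this yields $\{a_{\ell_n-1}x\}=\tfrac{c_{\ell_n}}{b_{\ell_n}}+\delta_n$ with $0\le\delta_n\le a_{k_{s_n}}/a_{k_{s_{n+1}}}$. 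Multiplying by $v_{s_n}$ and using $\|A+B\|\ge\|A\|-|B|$,
\[\|u_{s_n}x\|\ \ge\ \Big\|\tfrac{v_{s_n}c_{\ell_n}}{b_{\ell_n}}\Big\|-v_{s_n}\delta_n\ \ge\ \Big\|\tfrac{v_{s_n}c_{\ell_n}}{b_{\ell_n}}\Big\|-\tfrac{u_{s_n}}{a_{k_{s_{n+1}}}}\ \ge\ \Big\|\tfrac{v_{s_n}c_{\ell_n}}{b_{\ell_n}}\Big\|-\tfrac18,\]
the last step being exactly hypothesis (ii). Everything thus reduces to choosing $m_{\ell_n}$ so that $\big\|v_{s_n}c_{\ell_n}/b_{\ell_n}\big\|$ stays comfortably above $\tfrac18$.

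The heart of the matter is therefore the arithmetic claim: for $b:=b_{\ell_n}\ge 2$ and $v:=v_{s_n}$ with $b\nmid v$ (this is precisely the standing hypothesis $b_{k_{s_n}+1}\nmid v_{s_n}$), there is an $m\in\{2,\dots,b\}$ with $\big\|v\lfloor b/m\rfloor/b\big\|\ge\tfrac14$. Since $vc\equiv (v\bmod b)c\pmod b$ and $\|y\|=\|-y\|$, I may assume $1\le v\le b/2$; I would also use that $\{\lfloor b/m\rfloor:2\le m\le b\}$ contains $\{1,\dots,\lfloor\sqrt b\rfloor\}$ as well as the ``large'' values $\lfloor b/2\rfloor,\lfloor b/3\rfloor,\dots$. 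Then I split into two regimes. When $v$ is large relative to $\sqrt b$, the index $c^\ast=\lceil b/(4v)\rceil$ is at most $\sqrt b$ (hence admissible) and satisfies $vc^\ast/b\in[\tfrac14,\tfrac14+\tfrac vb)\subseteq[\tfrac14,\tfrac34)$, so $\|vc^\ast/b\|\ge\tfrac14$. When $v$ is small relative to $\sqrt b$, the interval $[\tfrac{4v}{3},4v]$ has length $\ge 1$ and lies inside $[2,b]$, so it contains an integer $m$ with $v/m\in[\tfrac14,\tfrac34]$; here the replacement error $\big|\tfrac{v\lfloor b/m\rfloor}{b}-\tfrac vm\big|\le\tfrac vb$ is negligible, leaving $\|v\lfloor b/m\rfloor/b\|$ bounded below by a constant exceeding $\tfrac18$. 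This case split, together with the bookkeeping of the replacement error and the boundary values $v=1$ and small $b$, is where the only genuine work lies; I expect the constants to need mild tuning, but the factor $8$ in (ii) leaves a safe margin over the tail bound $\tfrac18$.

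Finally, selecting $m_{\ell_n}$ for each $n$ according to the claim gives $\|u_{s_n}x\|\ge\tfrac14-\tfrac18=\tfrac18>0$ for every $n$, so $(\|u_nx\|)$ does not converge to $0$ and hence $x\notin t_{(u_n)}(\T)$. The hypotheses enter exactly as expected: (i) fixes the digits and forces $c_{\ell_n}$ into the admissible shape $\lfloor b/m\rfloor$ on which the arithmetic claim operates; the divisibility condition $b_{k_{s_n}+1}\nmid v_{s_n}$ makes the claim non-vacuous by ensuring $v\not\equiv0$; and (ii) controls the tail $\delta_n$.
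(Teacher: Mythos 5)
Your proposal has the same skeleton as the paper's proof: both isolate the main term $c_{k_{s_i}+1}v_{s_i}/b_{k_{s_i}+1}$ of $\{u_{s_i}x\}$ (all earlier terms being integers), bound the tail by $u_{s_i}/a_{k_{s_{i+1}}}\leq\tfrac18$ using hypothesis (ii), and then must choose $m_i$ so that the main term stays away from $0$ and $1$ modulo $1$. Where you genuinely diverge is this last, central step. The paper makes an explicit, case-free choice: writing $\{v_{s_i}/b\}=l_i/b$ with $b=b_{k_{s_i}+1}$ and $l_i\in\{1,\dots,b-1\}$ (non-zero precisely by the hypothesis $b\nmid v_{s_i}$), it sets $m=2l_i$ if $l_i\leq b/2$ and $m=2(b-l_i)$ otherwise; then writing $b=mc+e$ for $c=\lfloor b/m\rfloor$, the identity $cl_i/b=cl_i/(2cl_i+e)$ gives $\{cv_{s_i}/b\}\in[\tfrac14,\tfrac34]$ uniformly in $b$ and $v$, with no thresholds and no exceptional cases. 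You instead prove an existence claim via a dichotomy on the size of $v\bmod b$ relative to $\sqrt b$, using the divisor fact $\lfloor b/\lfloor b/c\rfloor\rfloor=c$ for $c\leq\sqrt b$ in one regime and the approximation $v\lfloor b/m\rfloor/b\approx v/m$ in the other. This can be made to work, but the tuning you flag is genuinely necessary, not cosmetic: in your small-$v$ regime the bound you actually obtain is $\bigl\|v\lfloor b/m\rfloor/b\bigr\|\geq\tfrac14-v/b$, and since your window $v/m\in[\tfrac14,\tfrac34]$ allows the value to sit at $\tfrac14$ while $v/b$ can be as large as $1/(2\sqrt b)$, the loss equals $\tfrac18$ already at $b=16$; the final estimate then degenerates to $\geq 0$, and since $b_{\ell_n}$ may take such moderate values for infinitely many $n$, you lose the uniform positive lower bound needed to conclude $x\notin t_{(u_n)}(\T)$. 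The fix is the kind you anticipated — e.g.\ target $v/m\in[\tfrac13,\tfrac12]$ by picking an integer $m\in[2v,3v]$, which gives $\geq\tfrac13-v/b$ and a uniform margin for all $b$ beyond a finite threshold, plus a direct check of the finitely many small $b$ and the case $v\equiv\pm1$. The paper's construction of $m_i$ directly from $l_i$ is what that explicit choice buys: it disposes of all thresholds, all small-$b$ checks, and the $v=1$ case in one stroke.
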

\begin{proof}
Assume that $u_n=a_{k_n}v_n$ where $k_n\to\infty$ and $b_{{k_n}+1}\nmid v_n$ for any $n\in\mathbb{N}$. Pick an $x\in\mathbb{T}$ with the aforesaid properties. Therefore, observe that
\begin{eqnarray}\label{eq 12a}
 \{u_{s_i}x\} &=& \left\{v_{s_i}a_{k_{s_i}}\sum_{t=k_{s_i}+1}^{\infty}\frac{c_t}{b_t}\right\}\nonumber \\
 &=& \left\{c_{k_{s_i}+1}\frac{v_{s_i}}{b_{k_{s_i}+1}}+u_{s_i}\sum_{t=k_{s_{(i+1)}}+1}^{\infty}\frac{c_t}{a_t}\right\}
 \end{eqnarray}
 Since $b_{k_{s_i}+1}\nmid v_{s_i}$ for any $i\in\mathbb{N}$, we get that
\begin{equation*}
    \left\{\frac{ v_{s_i}}{b_{k_{s_i}+1}}\right\}=\frac{l_i}{b_{k_{s_i}+1}}~\mbox{ for some }~ l_i\in\{1,2,...,b_{k_{s_i}+1}-1\}.
\end{equation*}
First, we set $l_i'=b_{k_{s_i}+1}-l_i$. Let us now set
\begin{equation*}
m_{k_{s_i}+1}=
    \begin{cases}
		2l_i &~\mbox{ if }~l_i\leq\frac{b_{k_{s_i}+1}}{2},\\
		2l_i' &~ \mbox{ if }~l_i>\frac{b_{k_{s_i}+1}}{2}.
	\end{cases}
\end{equation*}
So, we consider the following cases:
\begin{itemize}
    \item [\textbf{Case I:}] Assume that $l_i\leq\frac{b_{k_{s_i}+1}}{2}$. Then Condition (i) yields that there exists an $e_i\in\{1,2,...,2l_i-1\}$ such that
    $$
    c_{k_{s_i}+1}=\frac{b_{k_{s_i}+1}-e_i}{2l_i}.
    $$
    Thus, we have
    \begin{eqnarray*}
    \left\{c_{k_{s_i}+1}\frac{v_{s_i}}{b_{k_{s_i}+1}}\right\}&=& \left\{c_{k_{s_i}+1}
    \left\{\frac{v_{s_i}}{b_{k_{s_i}+1}}\right\}\right\}\\
    &=& \left\{c_{k_{s_i}+1}\frac{l_i}{b_{k_{s_i}+1}}\right\}\\
    &=& \left\{\frac{l_ic_{k_{s_i}+1}}{2l_ic_{k_{s_i}+1}+e_i}\right\}\\
    ~~\mbox{   }~\Rightarrow \frac{1}{4} \leq \left\{c_{k_{s_i}+1}\frac{v_{s_i}}{b_{k_{s_i}+1}}\right\}\leq \frac{1}{2}.
    \end{eqnarray*}
    \item[\textbf{Case II:}] Assume that $l_i>\frac{b_{k_{s_i}+1}}{2}$. Again, in view of Condition (i), there exists an $e_i'\in\{1,2,...,2l_i'-1\}$ such that
     $$
    c_{k_{s_i}+1}=\frac{b_{k_{s_i}+1}-e_i'}{2l_i'}.
    $$
    Subsequently,
    \begin{eqnarray*}
    \left\{c_{k_{s_i}+1}\frac{v_{s_i}}{b_{k_{s_i}+1}}\right\}
    &=& \left\{c_{k_{s_i}+1}\frac{l_i}{b_{k_{s_i}+1}}\right\}\\
    &=& \left\{c_{k_{s_i}+1}\frac{b_{k_{s_i}+1}-l_i'}{b_{k_{s_i}+1}}\right\}\\
    &=& \left\{c_{k_{s_i}+1}-\frac{c_{k_{s_i}+1}l_i'}{2l_i'c_{k_{s_i}+1}+e_i'}\right\}\\
    ~~\mbox{   }~\Rightarrow \frac{1}{2} \leq \left\{c_{k_{s_i}+1}\frac{v_{s_i}}{b_{k_{s_i}+1}}\right\}\leq \frac{3}{4}.
    \end{eqnarray*}
\end{itemize}
Thus, in both cases, we get
\begin{equation}\label{eq 12}
\frac{1}{4} \leq \left\{c_{k_{s_i}+1}\frac{v_{s_i}}{b_{k_{s_i}+1}}\right\}\leq \frac{3}{4}.
\end{equation}
Now, in view of Condition (ii), we also have
\begin{equation}\label{eq 13}
 0\leq u_{s_i}\sum_{t=k_{s_{(i+1)}}+1}^{\infty}\frac{c_t}{a_t} \leq \frac{u_{s_i}}{a_{k_{s_{(i+1)}}}} \leq\frac{1}{8}.
\end{equation}
Finally, in view of Eq (\ref{eq 12a}), Eq (\ref{eq 12}), and Eq (\ref{eq 13}), one obtains
$$
\frac{1}{4}\leq\{u_{s_i}x\}\leq \frac{7}{8}.
$$
Hence we can conclude that $x\notin t_{(u_n)}(\mathbb{T})$.
\end{proof}
\begin{definition}
An arithmetic sequence $(a_n)$ is called weakly dense density lifting invariant (in short, weakly dense dli) if for any infinite subset $K$ of  $\N$ there exists an infinite $A\subseteq K$ such that ${d}(L(A-m))=0$ for each $m\in\N\cup \{0\}$.
\end{definition}
For an arithmetic sequence $(a_n)$, it is easy to observe that
$$
\mbox{$q$-bounded}  \ \Rightarrow \ dli \ \Rightarrow  \mbox{ weakly dense } dli  \ \Rightarrow  \mbox{ weakly } dli.
$$
\begin{theorem}\label{thnovel}
For any weakly dense dli arithmetic sequence $(a_n)$, the subgroup  $t^{s}_{(d_n)}{(\mathbb{T})}$ cannot be characterized.
\end{theorem}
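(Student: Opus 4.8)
The plan is to argue by contradiction, combining the structural obstruction of Lemma \ref{lg1} with the Arbault-type obstruction of Lemma \ref{arbault lemma}, using the weakly dense dli hypothesis as the bridge. Suppose $t^{s}_{(d_n)}(\T)=t_{(u_n)}(\T)$ for some sequence of integers $(u_n)$; by the standard reduction mentioned in Section 1 we may assume $(u_n)$ is a strictly increasing sequence of positive integers. The first step is to locate $G=\{1/a_n:n\in\N\}$ inside the subgroup. Each $1/a_n$ has finite support, so for all large $m$ we have $d_m=ra_k$ with $k\geq n$, whence $a_k/a_n\in\Z$ and $\{d_m/a_n\}=0$; thus $G\subseteq t_{(d_n)}(\T)\subseteq t^{s}_{(d_n)}(\T)=t_{(u_n)}(\T)$. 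Lemma \ref{lg1} then forces $u_n=a_{k_n}v_n$ with $k_n\to\infty$ and $b_{k_n+1}\nmid v_n$ for every $n$, which is exactly the hypothesis feeding Lemma \ref{arbault lemma}.

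Next I would exploit that $(a_n)$ is weakly dense dli. The set $K=\{k_n+1:n\in\N\}$ is infinite (since $k_n\to\infty$), so the weakly dense dli property yields an infinite $A\subseteq K$ with $d(L(A-m))=0$ for every $m\in\N\cup\{0\}$. Because $L$ is monotone with respect to inclusion (Remark \ref{uconr2}), every infinite subset $A'\subseteq A$ again satisfies $\bar d(L(A'-m))\leq\bar d(L(A-m))=0$ for all $m$. I would then thin $A$ to build the desired support: writing each element of $A$ as $k_n+1$, the index set $S=\{n:k_n+1\in A\}$ is infinite, and since $k_n\to\infty$ I can pick greedily an increasing sequence $s_1<s_2<\cdots$ in $S$ with $a_{k_{s_{(i+1)}}}\geq 8u_{s_i}$ for all $i$. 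This simultaneously arranges $\supp(x)=\{k_{s_i}+1:i\in\N\}\subseteq A$ and verifies conditions (i)–(ii) of Lemma \ref{arbault lemma}.

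The contradiction is then assembled from the two obstructions applied to the same $x$. By Lemma \ref{arbault lemma} the coefficients $m_i$ (and hence $c_i=\lfloor b_i/m_i\rfloor$) can be chosen so that $x\notin t_{(u_n)}(\T)$. On the other hand, since $\supp(x)\subseteq A$ and $d(L(A-m))=0$ for every $m$, the element $x$ satisfies the statistical-convergence estimate exactly as in the proof of Theorem \ref{sconthmain1} (for each $\varepsilon$ one chooses $m$ with $1/2^{m-2}<\varepsilon$ and discards the density-zero set $=^{*}\bigcup_{i=0}^{m-1}L(\supp(x)-i)$), so $x\in t^{s}_{(d_n)}(\T)=t_{(u_n)}(\T)$. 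This contradicts $x\notin t_{(u_n)}(\T)$, and since $(u_n)$ was arbitrary, $t^{s}_{(d_n)}(\T)$ cannot be characterized.

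I expect the main obstacle to be the coordination step rather than either obstruction in isolation: the Arbault argument of Lemma \ref{arbault lemma} demands a support of the very specific arithmetic shape $\{k_{s_i}+1\}$ with a sparseness condition tied to $(u_n)$, while membership in $t^{s}_{(d_n)}(\T)$ demands that this support sit inside a set whose lift has density zero along all translates. These two requirements pull in opposite directions, and it is precisely the weakly dense dli hypothesis—guaranteeing a density-zero-lifting set inside \emph{any} prescribed infinite set, here $K=\{k_n+1\}$—that lets both be met at once; care is needed to check that further thinning for condition (ii) preserves the lifting property, which follows from monotonicity of $L$.
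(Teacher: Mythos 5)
Your proposal is correct and follows essentially the same route as the paper: contradiction via Lemma \ref{lg1} to get $u_n=a_{k_n}v_n$, then the weakly dense dli hypothesis applied to $\{k_n+1\}$ to extract a support set with density-zero lifted translates, thinned to meet the sparseness condition of Lemma \ref{arbault lemma}, so that the resulting $x$ lies in $t^{s}_{(d_n)}(\T)$ (by the Theorem \ref{sconthmain1} estimate) but not in $t_{(u_n)}(\T)$. Your explicit two-step thinning and the appeal to monotonicity of $L$ just spell out what the paper does tersely when it "chooses a subsequence $(u_{s_i})$" satisfying its conditions (i)--(iv).
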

\begin{proof}
Assume, on the contrary, that there exists a weakly dense dli arithmetic sequence $(a_n)$ for which the subgroup $t^{s}_{(d_n)}{(\mathbb{T})}$ can be characterized. Then there exists an increasing sequence $(u_n)$ of naturals such that
$$
t^{s}_{(d_n)}{(\mathbb{T})}\subseteq t_{(u_n)}{(\mathbb{T})}.
$$
We set $G=\{\frac{1}{a_n}:n\in\mathbb{N}\}$. Then observe that $G\subseteq t^{s}_{(d_n)}{(\mathbb{T})}$ since each $x\in G$ has a finite support. Therefore, Lemma \ref{lg1} ensures that $u_n=a_{k_n}v_n$ where $k_n\to\infty$ and $b_{{k_n}+1}\nmid v_n$ for any $n\in\mathbb{N}$. Since $(a_n)$ is weakly dense dli, we choose a subsequence $(u_{s_i})$ of $(u_n)$ with the following properties:
\begin{itemize}
    \item [(i)] $u_{s_1}=u_1$,
    \item [(ii)] $|k_{s_{(i+1)}}-k_{s_i}|\to\infty$, and
    \item[(iii)] $a_{k_{s_{(i+1)}}}\geq 8u_{s_i}$.
    \item[(iv)] $d(\bigcup_{i=0}^{m+1}L(A-i))=0$, where $A=\{k_{s_i}+1:i\in\mathbb{N}\}$.
\end{itemize}
Consider $x\in\mathbb{T}$ such that
$$
supp(x)=A\mbox{ and }c_t=\left\lfloor\frac{b_t}{m_t}\right\rfloor\mbox{ for all } t\in\supp(x)~(\mbox{with}~1<m_t\leq b_t).
$$
Therefore, Lemma \ref{arbault lemma} entails that $x\notin t_{(u_n)}(\mathbb{T})$.

We claim that $x\in t^{s}_{(d_n)}{(\mathbb{T})}$. Let $0<\epsilon<\frac{1}{2}$ be given and choose an $m\in\mathbb{N}$ such that $\frac{1}{2^m}<\epsilon$. We set
$$
B=\bigcup_{i=m+1}^{\infty} [n_{k_{s_i}-m-1},n_{k_{s_i}+1}-1].
$$
Observe that $d(B)=0$ since $B\subseteq\bigcup_{i=0}^{m+1}d(L(A-i))$. Then, for large enough $i\in \mathbb{N}\setminus B$, we  have $i=n_{l}+r-1$ where
$$r\in[1,b_{l+1}-1]~\mbox{ and }~l\notin \displaystyle\bigcup_{j=m+1}^{\infty}[k_{s_j}-m-1,k_{s_j}+1],$$ i.e., $l,l+1,...,l+m+1\notin supp(x)$.
Therefore, we have
\begin{eqnarray*}
    r\{a_lx\}&\leq &ra_l\sum_{i=l+m+2}^{\infty}\frac{c_i}{a_i}\\
              &\leq & \frac{ra_l}{a_{l+m+1}} \leq \frac{r}{b_{l+1}}\frac{a_{l+1}}{a_{l+m+1}}\leq \frac{1}{2^{m}}<\epsilon.
\end{eqnarray*}
Thus, by Eq (\ref{eqr}), we get that $\{ra_lx\}=r\{a_lx\}$. Now, for sufficiently large $i\in \mathbb{N}\setminus B$, we obtain
\begin{align*}
    \{d_ix\}=\{d_{n_l+r-1}x\}=\{ra_lx\}=r\{a_lx\}<\epsilon.
\end{align*}
This entails that $x\in t^{s}_{(d_n)}{(\mathbb{T})} \  -$ which is a contradiction. Therefore, we conclude that the subgroup $t^{s}_{(d_n)}{(\mathbb{T})}$ is not characterizable.
\end{proof}
The following question is an easy consequence of our previous result, namely, Theorem \ref{thnovel}.
\begin{problem}\label{probnew1}
Is $t^s_{(d_n)}(\T)$ characterizable whenever $(a_n)$ is not weakly dense dli?
\end{problem}
We have observed that if $(a_n)$ is weakly dli then $|t^s_{(d_n)}(\T)|=\mathfrak{c}$. Additionally, there exist examples of $(a_n)$ (which are obviously not weakly dli) such that $t^s_{(d_n)}(\T)$ is countable. This leads us to conclude the article with the following open problem:
\begin{problem}\label{probnew2}
Is $t^s_{(d_n)}(\T)$ countable whenever $(a_n)$ is not weakly dli?
\end{problem}

Since countable subgroups of $\T$ are characterizable \cite{DK}, a positive solution to Problem \ref{probnew2} also provides a partial solution to Problem \ref{probnew1}.\\

\noindent{\textbf{Acknowledgement:}} The first author as PI and the second author as RA are thankful to SERB(DST) for the CRG project (No. CRG/2022/000264) and the first author is also thankful to SERB for the MATRICS project (No. MTR/2022/000111) during the tenure of which this work has been done. \\

\noindent{\textbf{Authorship contribution statement:}} Pratulananda Das: Writing – review and editing, Supervision. Ayan Ghosh: Writing – review and editing, Conceptualization. Tamim Aziz: Writing – original draft. \\

\noindent{\textbf{Conflict of Interest:}} The authors state that there is no conflict of interest. \\

\noindent{\textbf{Data Availability:}} Not applicable.

\end{document}